\newtheorem{theorem}{Theorem}[section]
\newtheorem{lemma}[theorem]{Lemma}
\newtheorem{prop}[theorem]{Proposition}
\theoremstyle{definition}
\theoremstyle{remark}
\newtheorem{remark}[theorem]{Remark}
\numberwithin{equation}{section}
\numberwithin{table}{section}
\newcommand{\li}{\mbox{Li}}
\newcommand{\leqs}{\leqslant}
\newcommand{\geqs}{\geqslant}
\newcommand{\eps}{\varepsilon}
\renewcommand{\theta}{\vartheta}
\begin{document}
\title{Estimates of some functions over primes \\ without R.H.}

\author{Pierre DUSART}
\address{XLIM UMR 6172, Facult\'e des Sciences et Techniques, Universit\'e de Limoges, 87060, FRANCE}
\curraddr{D\'epartement de Math\'ematiques,
123 avenue Albert THOMAS, 87060 LIMOGES CEDEX, FRANCE}
\email{pierre.dusart@unilim.fr}

\subjclass{Primary 11N56; Secondary 11A25, 11N05}

\date{January 24, 2007 
.}


\keywords{Number Theory, arithmetic functions}

\begin{abstract}
Some computations made about the Riemann Hypothesis and in particular, the verification that zeroes of $\zeta$ belong on the critical line and the extension of zero-free region are useful to get better effective estimates of number theory classical functions which are closely linked to $\zeta$ zeroes like $\psi(x),\theta(x),\pi(x)$ or the $k^{th}$ prime number $p_k$.
\end{abstract}

\maketitle

\section{Introduction}

In many applications it is useful to have explicit error bounds
in the prime number theorem. \textsc{Rosser}~\cite{Rosser39,Rosser41} 
developed an analytic method which combines a numerical verification
of the \textsc{Riemann} hypothesis with a zero-free region and derived explicit estimates for some number theoretical functions. 
The aim of this paper is to find sharper bounds for the 
\textsc{Chebyshev}'s functions  $\psi(x)$, the logarithm of the least 
common multiple of all integers not exceeding $x$,  and 
$\theta(x)$, the product of all primes not exceeding $x$:
$$\theta(x)=\sum_{p\leqs x}\ln p,\hspace{1cm} 
\psi(x)=\sum_{\substack{{p,\alpha}\\{p^\alpha\leqs x}}}\ln p$$
where sum runs over primes $p$ and respectively  
over powers of primes $p^\alpha$.
The Prime Number Theorem could be written as follows: 
$$\psi(x)=x+o(x),\;\;x\rightarrow +\infty.$$
An equivalent formulation of the above theorem should be:
 for all $\eps>0$, there exists $x_0=x_0(\eps)$ 
such that 
$$|\psi(x)-x|<\eps\,x\hspace{0.5cm}\mbox{ for }x\geqs x_0$$
or 
$$|\theta(x)-x|<\eps\,x\hspace{0.5cm}\mbox{ for }x\geqs x_0.$$
Under Riemann Hypothesis (RH), \textsc{Schoenfeld} \cite{Schoenfeld:MathOfComp:Sharper} gives interesting results. 
Without the assumption of the RH, the results are not so accurate and depend on the knowledge about Riemann Zeta function.  
This article hangs up on some known results: 
the most important works on effective results have been shown by
 \textsc{Rosser} \&  \textsc{Schoenfeld} 
\cite{Rosser&Schoenfeld:Ill:Approx,Rosser&Schoenfeld:MathOfComp:Sharper,Schoenfeld:MathOfComp:Sharper}, 
 \textsc{Robin}~\cite{Robin:Acta:estim} \&  \textsc{Massias}~\cite{Massias:NP:effectiv}
 \and  \textsc{Costa Pereira}~\cite{Pereira:MathOfComp:Chebyshev}.

The proofs for estimates of $\psi(x)$ 
in \cite{Rosser&Schoenfeld:MathOfComp:Sharper} 
are based on the verification  of {\sc Riemann} hypothesis 
to a given height 
and an explicit zero-free region for $\zeta(s)$ 
whose form is essentially that the classical one of 
 \textsc{De la vall\'ee Poussin}.  \textsc{Rosser \& Schoenfeld}
have shown that the first  3~502~500 zeros of $\zeta(s)$ 
are on the critical strip. 
 \textsc{Van de Lune }{\it et al}~\cite{Lune86:ZerosZeta} have shown that 
the first 1~500~000~000 zeros  are on the critical strip.
Recently, \textsc{Wedeniwski}~\cite{zetagrid} then \textsc{Gourdon} \cite{Gourdon} manage to compute zeros in a parallel way and prove that the Riemann Hypothesis is true at least for first $10^{13}$ nontrivial zeros.

This will improve bounds~\cite{dusart:eps} for $\psi(x)$ and $\theta(x)$ for large values of $x$. 
We will prove the following results:
\begin{eqnarray*}
\theta(x)-x&<&\frac{1}{36\,260}x
        \hspace{1cm}\mbox{ for }x>0,\\
|\theta(x)-x|&\leqs&0.2\frac{x}{\ln^2 x}
        \hspace{1cm}\mbox{ for }x\geqs3\,594\,641.
\end{eqnarray*}
We apply these results on $p_k$, the $k^{th}$ prime, and $\theta(p_k)$. 
Let's denote by  $\ln_2 x$ for $\ln\ln x.$
The asymptotic expansion of $p_k$ is well known;
 \textsc{Cesaro}~\cite{Cesaro} then  \textsc{Cipolla}~\cite{Cipolla:determinazione} 
expressed it in 1902:
$$p_k=k\left\{\ln k+\ln_2 k-1+\frac{\ln_2 k-2}{\ln k}-\frac{\ln_2^2k-6\ln_2 k+11}{2\ln^2k}
+O\left(\left(\frac{\ln_2k}{\ln k}\right)^3\right)\right\}.$$
A more precise work about this can be find in \cite{Robin:MathOfComp:perm,Salvy:??:Fast}.
The results on $p_k$ are:
\begin{eqnarray*}
p_k&\leqs &k\left(\ln k+\ln_2 k-1+\frac{\ln_2 k-2}{\ln k}\right)
\hspace{1cm}\mbox{ for }k\geqs 688\,383,\\
p_k&\geqs &k\left(\ln k+\ln_2 k-1+\frac{\ln_2 k-2.1}{\ln k}\right)
\hspace{1cm}\mbox{ for }k\geqs 3.
\end{eqnarray*}

We use the above results to prove that, for $x\geqs 396\,738$, the interval 
$$\left[x,x+x/(25\ln^2 x)\right]$$ contains at least one prime.
Let's denote by $\pi(x)$ the number of primes not greater than $x$.
We show that
$$\frac{x}{\ln x}\left(1+\frac{1}{\ln x}\right)\;
{\stackrel{\leqs}{_{_{\scriptscriptstyle x\geqs599}}}}\;\pi(x)\;
{\stackrel{\leqs}{_{_{\scriptscriptstyle x>1}}}}\;
\frac{x}{\ln x}\left(1+\frac{1.2762}{\ln x}\right).$$ 
More precise results on $\pi(x)$ are also shown:
\begin{eqnarray*}
\pi(x)&\geqs&\frac{x}{\ln x-1} \mbox{ for } x\geqs5\,393,\\
\pi(x)&\leqs&\frac{x}{\ln x-1.1} \mbox{ for } x\geqs60\,184,\\
\pi(x)&\geqs&\frac{x}{\ln x}\left(1+\frac{1}{\ln x}+\frac{2}{\ln^2 x}\right)
         \mbox{ for }  x\geqs 88\,783,\\
\pi(x)&\leqs&\frac{x}{\ln x}\left(1+\frac{1}{\ln x}+\frac{2.334}{\ln^2 x}\right)
        \mbox{ for } x\geqs 2\,953\,652\,287.
\end{eqnarray*}

\section{Exact computation of $\theta$}

From the well-known identity
\begin{equation}
\label{DefPsi2}
\psi(x)=\sum_{k=1}^\infty \theta(x^{1/k}),
\end{equation}
we have
$$\theta(x)=\psi(x)-\sum_{k=2}^\infty \theta(x^{1/k}).$$
From some exact values of $\psi(x)$ computed by \cite{deleglise:psi}, we obtain Tables \ref{table_theta1} \& \ref{table_theta2} (Exact values of $\theta(x)$)

\section{On the difference between $\psi$ and $\theta$}

As $\theta(2^-)=0$, the summation (\ref{DefPsi2}) ends:
$$\psi(x)=\sum_{k=1}^{\lfloor\frac{\ln x}{\ln 2}\rfloor}\theta(x^{1/k})=\theta(x)+\theta(\sqrt{x})+\sum_{k=3}^{\lfloor\frac{\ln x}{\ln 2}\rfloor}\theta(x^{1/k}).$$

\subsection{Lower Bound}
\begin{prop}
For $x\geqs121$, we have
\begin{equation}
\label{MinDiffPsi}
0.9999\sqrt{x}<\psi(x)-\theta(x)
\end{equation}
\end{prop}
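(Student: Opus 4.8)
The plan is to start from the truncated identity
$$\psi(x)-\theta(x)=\theta(\sqrt{x})+\sum_{k=3}^{\lfloor\ln x/\ln 2\rfloor}\theta(x^{1/k}),$$
and simply to discard all the terms with $k\geqs 3$, which are nonnegative, so that it suffices to show $\theta(\sqrt{x})>0.9999\sqrt{x}$ for $x\geqs 121$, i.e. $\theta(y)>0.9999\,y$ for $y\geqs 11$. (In fact one can afford to keep the $k=3$ term for small $x$ if needed, but the $k=2$ term alone should be enough.) So the whole proposition reduces to a one-sided effective lower bound for $\theta$ near the bottom of its range.

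First I would invoke a known explicit lower bound of Chebyshev–Rosser–Schoenfeld type for $\theta(y)$: something of the form $\theta(y)>y(1-\eps(y))$ valid for all $y$ above a moderate threshold, with $\eps(y)$ small enough that $1-\eps(y)>0.9999$ for all $y$ exceeding that threshold. For instance the classical bound $\theta(y)>0.998\,y$ for $y\geqs 1319007$ is far too weak in constant; what is actually needed is a bound with relative error below $10^{-4}$. The cleanest route is to use an estimate like $|\theta(y)-y|<\eps_0\, y$ with $\eps_0<10^{-4}$ for $y\geqs Y_0$ (such bounds are available from Rosser–Schoenfeld / Schoenfeld, or can be quoted from the author's earlier work, and indeed the introduction already announces $\theta(x)-x<x/36260$ which on the lower side is complemented by a comparable bound); this handles $y\geqs Y_0$ immediately.

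The remaining work is then purely finite: for $11\leqs y< Y_0$ one must check $\theta(y)>0.9999\,y$ directly. Since $\theta$ is a step function that jumps at primes, on each interval $[p_i,p_{i+1})$ the ratio $\theta(y)/y$ is minimized at the right endpoint $y\to p_{i+1}^-$, so it is enough to verify $\theta(p_{i+1}^-)=\theta(p_i)>0.9999\,p_{i+1}$ for consecutive primes up to $Y_0$. Starting from small primes: $\theta(11)=\ln 2+\ln 3+\ln 5+\ln 7+\ln 11\approx 7.74$ while $0.9999\cdot 13\approx 12.999$ — wait, that fails, so in fact for very small $y$ the $k=2$ term alone is \emph{not} enough and one genuinely needs the extra terms $\theta(x^{1/3})+\cdots$ together with the fact that $x\geqs 121$ pushes $\sqrt{x}\geqs 11$ only barely; this is exactly why the constant is $0.9999$ and not $1$, and why the threshold is $121$. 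So the honest version: keep $\theta(\sqrt{x})+\theta(x^{1/3})$ (and more for small $x$), and verify $\psi(x)-\theta(x)>0.9999\sqrt{x}$ on the finite range $121\leqs x<X_0$ by a direct computation over prime jumps of $\psi-\theta$, which is a finite check.

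The main obstacle is precisely this interaction near the lower endpoint: the asymptotic/effective bound on $\theta$ only kicks in well above $y=11$, so the delicate part is the finite verification on the initial range, where one must be careful to account for all terms $\theta(x^{1/k})$, $k\geqs 2$, and confirm the inequality holds with the stated constant $0.9999$ right down to $x=121$. Once the effective bound $\theta(y)>(1-\eps_0)y$ with $\eps_0<10^{-4}$ is in hand for $y\geqs Y_0$, the large-$x$ range is automatic; everything hard is concentrated in the bookkeeping of the small-$x$ check.
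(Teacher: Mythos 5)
Your skeleton --- reduce the claim to a lower bound for $\theta(\sqrt{x})$ when $x$ is large, plus a finite verification below some threshold $X_0$ --- matches the paper only at the very top of the range, and as written it has a genuine gap in the middle. The inequality $\theta(y)>0.9999\,y$ is extremely demanding: without RH the explicit bounds give $|\theta(y)-y|<\eps\,y$ with $\eps<10^{-4}$ only for $y\geqs e^{25}\approx 7\cdot 10^{10}$ (see Table~\ref{table_eps}), so discarding the $k\geqs 3$ terms proves the proposition only for $x>e^{50}\approx 5\cdot 10^{21}$; even using the computed tables of $\theta$ one cannot certify $\theta(y)>0.9999\,y$ much below $y\approx 3\cdot 10^{8}$, leaving $X_0$ of order $10^{17}$ at best. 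A ``direct computation over prime jumps of $\psi-\theta$'' on $[121,X_0]$ is therefore not a feasible finite check, and you supply no other mechanism for that range, which is where the entire difficulty of the proposition lives. The paper fills this hole with two ingredients you do not anticipate: it quotes Theorem~24 of \cite{Rosser&Schoenfeld:Ill:Approx} to dispose of $121\leqs x\leqs 10^{16}$ outright, and on $10^{16}\leqs x\leqs e^{50}$ it works from the decomposition $\psi(x)-\theta(x)=\psi(\sqrt{x})+\sum_{k\geqs1}\theta(x^{1/(2k+1)})$, keeping $\psi(\sqrt{x})+\theta(x^{1/3})$. The point is that the error term $\eps_\psi$ for $\psi$ is smaller than the one for $\theta$ at the same height, and the extra positive contribution $\theta(x^{1/3})>x^{1/3}-2x^{1/6}$ absorbs the deficit $(0.0001-\eps_b)\sqrt{x}$ on subintervals where $\eps_b$ still slightly exceeds $10^{-4}$.

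To your credit, you correctly diagnose that the naive reduction $\theta(y)>0.9999\,y$ for $y\geqs 11$ fails at the bottom end, and your treatment of the large range via a bound of the form $|\theta(y)-y|<0.00009\,y$ for $y\geqs e^{25}$ is exactly what the paper does for $x>e^{50}$. But ``keep more terms and verify the finite range'' is not a proof over $121\leqs x\leqs e^{50}$: you need either a citable prior verification for the initial segment or the sharper $\psi(\sqrt{x})+\theta(x^{1/3})$ device for the intermediate one, and the proposal contains neither.
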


\begin{proof}
By Theorem 24 of \cite{Rosser&Schoenfeld:Ill:Approx} p.73, (\ref{MinDiffPsi}) is verified for $121\leqs x \leqs 10^{16}$.
Now by \cite{Pereira:MathOfComp:Chebyshev} p.~211,
$$\psi(x)-\theta(x)=\psi(\sqrt{x})+\sum_{k\geqs 1}\theta(x^\frac{1}{2k+1}),$$
hence
$$\psi(x)-\theta(x)\geqs\psi(\sqrt{x})+\theta(x^{1/3}).$$
By Theorem~19 of \cite{Rosser&Schoenfeld:Ill:Approx} p.72, we have
$$\theta(x^{1/3})>\sqrt[3]{x}-2x^{1/6}\mbox{ for }(1423)^3\leqs x\leqs (10^8)^3,$$
and we have for $x\geqs \exp(2b)$,
$$\psi(\sqrt{x})>\sqrt{x}-\eps_b \sqrt{x}=0.9999\sqrt{x}+(0.0001-\eps_b) \sqrt{x}.$$
where $\eps_b$ can be find in Table~\ref{table_eps} (or Table p.358 of \cite{Schoenfeld:MathOfComp:Sharper}).
We verify that
$$(0.0001-\eps_b)\sqrt{x}+\sqrt[3]{x}-2x^{1/6}>0$$
for $10^{16}\leqs x\leqs e^{50}$ by intervals (we use $b=18.42,\;20,\;22$).
For $y\geqs e^{25}$, Table~\ref{table_eps} gives $|\psi(y)-y|<0.00007789y.$ Hence we have by Th.13 of \cite{Rosser&Schoenfeld:Ill:Approx},
\begin{eqnarray*}
|\theta(y)-y|&\leqs&|\psi(y)-y|+|\theta(y)-\psi(y)|<0.00007789y+1.43\sqrt{y}\\
			&<&0.00009y 
\end{eqnarray*}
For $x>e^{50}$, we apply the previous result with $y=\sqrt{x}$ to obtain
$$\psi(x)-\theta(x)>\theta(\sqrt{x})\geqs 0.9999\sqrt{x}.$$
\end{proof}

\subsection{Upper Bound}
\begin{prop}
\label{maj_diff_psi_theta}
For $x>0$,
$$\psi(x)-\theta(x)<1.00007\sqrt{x}+1.78\sqrt[3]{x}.$$
\end{prop}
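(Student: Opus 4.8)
The plan is to bound the tail sum $\psi(x)-\theta(x)=\theta(\sqrt{x})+\sum_{k\geqs 3}\theta(x^{1/k})$ term by term, using an explicit upper bound for $\theta$ on each piece. The dominant term $\theta(\sqrt{x})$ should be controlled by the estimate $\theta(y)<1.000081y$ (or a similar explicit constant valid for all $y>0$, in the spirit of the $\theta(x)-x<\frac{1}{36\,260}x$ bound announced in the introduction), applied with $y=\sqrt{x}$; this yields a contribution just above $\sqrt{x}$. The second term $\theta(x^{1/3})$ is handled the same way, giving something like $1.0001\,x^{1/3}$. Everything from $k\geqs 4$ onward must be absorbed into the $x^{1/3}$ term, so the real content is showing that $\theta(x^{1/4})+\theta(x^{1/5})+\cdots$ together with the slack between $1.0001\,x^{1/3}$ and the claimed $1.78\,x^{1/3}$ is enough.

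First I would fix a threshold, say verify the inequality directly by machine (using the exact tables of $\theta$ from Section 2) for $x$ up to some bound like $x\leqs e^{60}$ or so, and for the range where $\theta(\sqrt x)$ and $\theta(x^{1/3})$ are not yet in the asymptotic regime. For larger $x$, write $K=\lfloor \ln x/\ln 2\rfloor$ and split: $\psi(x)-\theta(x)=\theta(\sqrt x)+\theta(x^{1/3})+\sum_{k=4}^{K}\theta(x^{1/k})$. For the first two terms use $\theta(y)<c\,y$ with $c$ very close to $1$ (from Rosser--Schoenfeld / the new $\psi$ estimates plus Prop.~\ref{maj_diff_psi_theta}'s lower-order analogue, or just the trivial-for-all-$x$ bound), obtaining $\theta(\sqrt x)<1.00007\sqrt x$ after choosing $c$ slightly below $1.00007$ and noting the difference eats part of the budget. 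For the tail I would use a crude bound such as $\theta(t)\leqs t\ln t$ or $\theta(t)<1.02\,t$ valid for all $t\geqs 1$, giving $\sum_{k=4}^{K}\theta(x^{1/k})\leqs (K-3)\cdot 1.02\,x^{1/4}\ll x^{1/3}$ once $x$ is large, since $x^{1/4}\log x = o(x^{1/3})$.

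The main obstacle is the bookkeeping of constants: the coefficient of $x^{1/3}$ must come out below $1.78$, so I need $1.0001 + (\text{everything from }k\geqs 4)\,x^{1/4 - 1/3}\cdot(\cdots) < 1.78$ uniformly, which forces an explicit lower threshold $x_0$ for the asymptotic argument, and then the gap $(x_0,e^{60}]$ or wherever the numeric check stops must be closed — possibly with an intermediate-range argument using the $\eps_b$ table (as in the previous proposition) rather than exact values. Concretely, the delicate point is that near the crossover the term $\theta(x^{1/3})$ is only a few times larger than $\theta(x^{1/4})+\cdots$, so the constant $1.78$ is chosen precisely to cover that worst case; pinning down where that worst case occurs and verifying it is where the work lies. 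Once the threshold is chosen so that for $x\geqs x_0$ one has $\theta(x^{1/3})+\sum_{k\geqs 4}\theta(x^{1/k}) < 1.78\,x^{1/3}$ and $\theta(\sqrt x)<1.00007\sqrt x$, and the finitely many smaller $x$ are checked against the exact tables, the proof concludes.
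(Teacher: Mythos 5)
Your decomposition and bookkeeping are exactly the paper's: the tail $\sum_{k\geqs 3}\theta(x^{1/k})$ is bounded by $1.777745\,x^{1/3}$ (Lemma 3.3, using $\theta(y)<1.000081y$ for the large-$x$ regime and direct computation for small $x$, with the worst case at $x=2401$), and the leading term $\theta(\sqrt{x})$ is handled by the sharper all-$x$ bound $\theta(y)<(1+\tfrac{1}{36\,260})y$ of Proposition~\ref{prop_eta0} — which, as you correctly sense, is essential, since the cruder constant $1.000081$ would exceed the claimed $1.00007$. Your proposal is correct and follows essentially the same route.
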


\begin{proof}
We use (\ref{DiffPsiSqrt}) and Proposition~\ref{prop_eta0}.

\end{proof}
\begin{lemma}
For $x>0$, we have
\begin{equation}
\label{DiffPsiSqrt}
\psi(x)-\theta(x)-\theta(\sqrt{x})<1.777745 x^{1/3}
\end{equation}
\end{lemma}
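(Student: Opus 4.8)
The plan is to bound $\psi(x)-\theta(x)-\theta(\sqrt{x})$ by expanding the left-hand side via the identity of the previous subsection. Since $\psi(x)=\sum_{k\geqs 1}\theta(x^{1/k})$, we have
$$\psi(x)-\theta(x)-\theta(\sqrt{x})=\sum_{k\geqs 3}\theta(x^{1/k})=\theta(x^{1/3})+\theta(x^{1/4})+\theta(x^{1/5})+\cdots,$$
a sum with only $\lfloor \ln x/\ln 2\rfloor - 2$ nonzero terms. First I would isolate the dominant term $\theta(x^{1/3})$ and control it from above using a known explicit Chebyshev bound, e.g. $\theta(t)<1.000028\,t$ (or the sharper $\theta(t)<t+c\sqrt t$ type bound, and for small $t$ the inequality $\theta(t)<1.01624\,t$ of Rosser--Schoenfeld), so that $\theta(x^{1/3})<(1+\delta)x^{1/3}$ for a tiny $\delta$. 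Then I would absorb the tail $\sum_{k\geqs 4}\theta(x^{1/k})$: each term satisfies $\theta(x^{1/k})\leqs C\,x^{1/k}\leqs C\,x^{1/4}$, and since the number of terms grows only like $\log x$ while $x^{1/4}/x^{1/3}=x^{-1/12}\to 0$, the whole tail is $o(x^{1/3})$ and in fact is bounded by a small multiple of $x^{1/3}$ once $x$ is large enough.

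The argument therefore splits into a large-$x$ range and a finite checking range. For $x$ beyond some explicit threshold $x_0$, the estimates above give $\psi(x)-\theta(x)-\theta(\sqrt x)\leqs (1+\delta)x^{1/3} + (\text{tail}) < 1.777745\,x^{1/3}$, where the gap between $1$ and $1.777745$ comfortably swallows both $\delta$ and the tail. For $2\leqs x\leqs x_0$ (and trivially for $0<x<8$, where the left side is $0$), I would verify the inequality directly: the left-hand side is a step function that only jumps at prime powers $p^k$ with $k\geqs 3$, so it is a finite computation to check that at each such jump point the new value of the left side stays below $1.777745\,x^{1/3}$ — and between jumps the left side is constant while the right side increases, so it suffices to check right after each jump. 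This is the same style of finite verification used in the proof of Proposition~\ref{maj_diff_psi_theta} and of (\ref{MinDiffPsi}).

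The main obstacle is calibrating $x_0$ and the constants so that the asymptotic bound genuinely kicks in before the finite range becomes computationally unpleasant: one needs $\theta(x^{1/3})/x^{1/3}$ pushed close enough to $1$, which forces $x^{1/3}$ to be reasonably large (so $x^{1/3}\geqs 10^4$ or so, i.e. $x\geqs 10^{12}$), and then one must be sure the explicit bound on $\theta(t)$ for $t\leqs 10^4$ together with the tail estimate covers everything below that. Concretely I expect the constant $1.777745$ is chosen precisely so that at the largest "bad" small value of $x$ (likely a small power of $2$ or $3$, where $\theta(\sqrt x)$ has just failed to absorb a term), equality is nearly attained; identifying that extremal $x$ and confirming the slack is the delicate point. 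Everything else — the tail bound, the monotonicity-between-jumps argument — is routine.
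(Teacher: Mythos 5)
Your approach is essentially the paper's: Dusart writes the left side as $\sum_{k\geqs 3}\theta(x^{1/k})$, bounds it by $1.000081\bigl(x^{1/3}+(\lfloor\ln x/\ln 2\rfloor-4)x^{1/4}\bigr)<1.2\,x^{1/3}$ for $x>(10^{11})^3$ using $\theta(t)<1.000081\,t$, and disposes of smaller $x$ by direct computation, the maximum of the ratio being attained at $x=2401=7^4$. The one calibration you underestimate is the threshold: with the crude tail bound (number of terms times $x^{1/4}$) the asymptotic regime only starts near $10^{33}$, not $10^{12}$, so one must either sharpen the tail estimate slightly (peel off the $k=4$ term separately) or accept a larger finite range — but this does not change the structure of the argument.
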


\begin{proof}
For $x>0$, we have $\theta(x)<1.000081x$ by \cite{Schoenfeld:MathOfComp:Sharper} p.360. Hence
\begin{eqnarray*}
\sum_{k=3}^{\lfloor\frac{\ln x}{\ln 2}\rfloor}\theta(x^{1/k})&<&1.000081\sum_{k=3}^{\lfloor\frac{\ln x}{\ln 2}\rfloor}x^{1/k}\\
&<&1.000081\left(x^{1/3}+\left(\left\lfloor\frac{\ln x}{\ln 2}\right\rfloor-4\right)x^{1/4}\right)\\
&<&1.2\;x^{1/3}\mbox{ for }x>(10^{11})^3.
\end{eqnarray*}
For small values, we have (\ref{DiffPsiSqrt}) by direct computation (Maximal value reaches for $x$=2401).
\end{proof}

\section{Useful Bounds}
\begin{eqnarray}
\label{bound1} p_k\leqs k\ln p_k &\mbox{ for }&k\geqs 4,\\
\label{bound2} \ln p_k\leqs \ln k+\ln_2 k +1&\mbox{ for }&k\geqs 2.
\end{eqnarray}

\begin{proof}
We deduce (\ref{bound1}) from $\pi(x)>\frac{x}{\ln x}$ (Corollary~1 of \cite{Rosser&Schoenfeld:Ill:Approx}).
By Theorem~3 of \cite{Rosser&Schoenfeld:Ill:Approx}, we have
$p_k<k(\ln k+\ln_2 k-1/2)$ hence $p_k<e k\ln k$ for $k\geqs 2$.
\end{proof}
\section{On the differences between $\theta$ and identity function}

\begin{prop}
\label{prop_eta0}
$\theta(x)-x<\frac{1}{36\,260} x$ for $x>0$.
\end{prop}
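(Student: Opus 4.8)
The plan is to write
$$\theta(x)-x=(\psi(x)-x)-(\psi(x)-\theta(x))$$
and, for $x\geqs 121$, to feed in the lower bound $\psi(x)-\theta(x)>0.9999\sqrt x$ from (\ref{MinDiffPsi}). This gives $\theta(x)-x<(\psi(x)-x)-0.9999\sqrt x$, so the whole problem reduces to an upper bound for $\psi(x)-x$, handled region by region, followed by a check that the resulting quantity stays below $x/36\,260$. Since $1/36\,260\approx 2.76\cdot 10^{-5}$ is appreciably smaller than the crude bound $\psi(x)<1.000081\,x$ quoted above, the elementary estimates are not enough and one must use the sharper, verification-dependent ones.

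I would split $(0,\infty)$ into three pieces. For $x$ below a threshold $X_0$, up to which $\theta(x)<x$ is known (directly from the exact values in Tables~\ref{table_theta1}--\ref{table_theta2} for small $x$, and from the verification of the first $10^{13}$ zeros of $\zeta$ in the relevant range), the inequality is trivial because $\theta(x)-x<0<x/36\,260$. On an intermediate window $X_0\leqs x\leqs e^{2b_1}$ I would use the finite–verification form of Schoenfeld's conditional estimate, $|\psi(x)-x|<\frac{1}{8\pi}\sqrt x\,\ln^2 x$, which holds unconditionally on a very wide range once the first $10^{13}$ zeros are checked; combined with (\ref{MinDiffPsi}) this gives $\theta(x)-x<\frac{1}{8\pi}\sqrt x\,\ln^2 x-0.9999\sqrt x$, and one verifies $\frac{\ln^2 x}{8\pi\sqrt x}-\frac{0.9999}{\sqrt x}\leqs \frac{1}{36\,260}$ throughout the window — this is essentially where the constant $36\,260$ is forced, by its value near $x=X_0$. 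Finally, for $x\geqs e^{2b_1}$ I would use Table~\ref{table_eps}: on each sub-interval $[e^{2b},e^{2b'}]$ one has $\psi(x)-x<\eps_b\,x$, hence $\theta(x)-x<\eps_b\,x-0.9999\sqrt x$, which is $<x/36\,260$ as soon as $\eps_b-0.9999\,x^{-1/2}\leqs 1/36\,260$; since $\eps_b\to 0$, finitely many such windows, closed off by a last interval $[e^{2b_{\max}},\infty)$ on which $\eps_{b_{\max}}\leqs 1/36\,260$ outright, cover everything.

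The main obstacle is the middle range: neither the small-$x$ verification nor the decaying $\eps_b$ is sharp enough there, so everything hinges on the $\frac{1}{8\pi}\sqrt x\,\ln^2 x$ estimate and on choosing $X_0$, $b_1$, and the intermediate $b$'s so that the windows overlap and the single constant $1/36\,260$ is admissible on all of them simultaneously. A secondary point is that $\theta$ and $\psi$ are step functions, so the supremum of $(\theta(x)-x)/x$ over an interval between two table points (or inside an $\eps_b$-window) is attained just after a prime; one must therefore compare $\theta$ at the right endpoint with $x$ at the left endpoint when reading the tables, and leave a little slack in the $\eps_b$-windows to absorb the jumps.
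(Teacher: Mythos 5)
Your skeleton matches the paper's: the identity $\theta(x)-x=(\psi(x)-x)-(\psi(x)-\theta(x))$ combined with the lower bound (\ref{MinDiffPsi}), a first region where $\theta(x)<x$ is known by direct computation, and a tail handled by the decaying $\eps_b$ of Table~\ref{table_eps}. The gap is in your middle window, and it is numerical but real. First, the range on which $\theta(x)<x$ is actually established is $x\leqs X_0=8\cdot 10^{11}$ (Table~\ref{table_012}, obtained by sieving); the verification of zeros of $\zeta$ does \emph{not} yield $\theta(x)<x$ on any interval, only analytic bounds of the kind you quote, so you cannot enlarge $X_0$ by that route. Now at $x=8\cdot 10^{11}$ one has $\ln^2x/(8\pi\sqrt x)\approx 3.34\cdot 10^{-5}$, and subtracting $0.9999/\sqrt x\approx 1.1\cdot 10^{-6}$ still leaves about $3.2\cdot 10^{-5}$, which exceeds $1/36\,260\approx 2.758\cdot 10^{-5}$. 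The inequality $\ln^2x/(8\pi\sqrt x)-0.9999/\sqrt x\leqs 1/36\,260$ only begins to hold around $x\approx 1.2\cdot 10^{12}$, so the interval from $8\cdot 10^{11}$ to roughly $1.2\cdot 10^{12}$ is covered by none of your three regions: the step ``one verifies \dots throughout the window'' fails at the left end of the window.

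The repair is to delete the middle window altogether, which is what the paper does: immediately above $8\cdot 10^{11}$ it uses the \emph{linear} bound $\psi(x)<(1+\eps_b)x$ with $b=\ln(8\cdot 10^{11})$, i.e.\ $\psi(x)<1.00002841\,x$, on $8\cdot10^{11}\leqs x\leqs e^{28}$, so that $\theta(x)<\psi(x)-0.9999\sqrt x<(1.00002841-0.9999\,e^{-14})x<1.00002758\,x$, and for $x\geqs e^{28}$ one already has $\eps_{28}\leqs 2.224\cdot 10^{-5}<1/36\,260$, where $\theta(x)\leqs\psi(x)$ alone suffices. Note in particular that the constant $36\,260$ is forced not where you place it (near $X_0$ in an $\frac{1}{8\pi}\sqrt x\ln^2x$ regime) but at the \emph{right} end $x=e^{28}$ of this first $\eps$-window, where $\sqrt x/x$ is smallest: $2.841\cdot 10^{-5}-0.9999\,e^{-14}=2.7579\cdot 10^{-5}$, which agrees with $1/36\,260$ to the displayed precision — the constant is tuned to make exactly this step close. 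Your closing remark about $\theta$ being a step function and reading the tables at the correct endpoints is sound and is indeed needed in the first region.
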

\begin{proof}
By table~\ref{table_012}, we have
$\theta(x)<x$ up to $8\cdot10^{11}$.
With (\ref{MinDiffPsi}) and $8\cdot10^{11}\leqs x\leqs e^{28}$,
$$\theta(x)<\psi(x)-0.9999\sqrt{x}<(1.00002841-0.9999/\sqrt{e^{28}})x<1.00002758x.$$
We conclude by computing $\eps_{28}\leqs 0.00002224$.
\end{proof}

\begin{theorem}
\label{th:eta_k}
We have
$$|\theta(x)-x|<\eta_k \frac{x}{\ln^k x}\quad\mbox{ for }x\geqs x_k$$
with 
$$\begin{array}{|r|c|c|c|c|c|c|c|}
\hline
k      & 0 & 1      &  1              &   2   & 2 & 2 & 2 \\
\hline
\eta_k & 1 & 1.2323 & 0.001           & 3.965 & 0.2 &0.05  &0.01  \\
\hline
x_k    & 1 & 2      & 908\,994\,923 &  2    &3\,594\,641 & 122\,568\,683&7\,713\,133\,853\\
\hline
\end{array}$$
and 
$$\begin{array}{|r|c|c|c|c|c|}
\hline
k&    3      &3  &3 &3   &4\\
\hline
\eta_k&20.83 &10 &1 &0.78  &1300\\
\hline
x_k&2    &32\,321 &89\,967\,803 & 158\,822\,621 &2\\
\hline
\end{array}$$
\end{theorem}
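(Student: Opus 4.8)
The plan is to reduce the whole theorem to a single analytic input: an explicit bound of De la Vallée-Poussin type for $|\psi(x)-x|$, sharpened by the recent verification that RH holds for the first $10^{13}$ zeros, and then to transfer that bound from $\psi$ to $\theta$ and repackage it in the various shapes $\eta_k x/\ln^k x$. Concretely, I would first record the master estimate that there exist constants such that, for $x\geqs x_0$,
\begin{equation*}
|\psi(x)-x|\leqs \sqrt{\frac{8}{17\pi}}\,\sqrt{X}\,\exp\!\left(-\frac{X}{R}\right)x,\qquad X=\sqrt{\ln x},
\end{equation*}
in Schoenfeld's style, with $R$ the zero-free-region constant and the numerical verification height $A=10^{13}$ plugged in; this is the ``Theorem~4.2''-type lemma that the rest of the section rests on. Differentiating the right-hand side once shows it is eventually decreasing and tends to $0$ faster than any power of $1/\ln x$, which is exactly what is needed for the tabulated claims.

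Next I would split the range of $x$ into ``small'' and ``large'' pieces at some threshold like $e^{m}$ (the excerpt's proof of Proposition~\ref{prop_eta0} already does this at $e^{28}$). On the large range, for each pair $(k,\eta_k)$ I would verify the inequality $\sqrt{8/(17\pi)}\sqrt{X}\exp(-X/R)\ln^k x\leqs\eta_k$ plus the correction coming from passing to $\theta$ — using Proposition~\ref{maj_diff_psi_theta} and Proposition~\ref{MinDiffPsi}, i.e. $\theta(x)-x < \psi(x)-x$ from above and $\theta(x)-x > (\psi(x)-x)-1.00007\sqrt{x}-1.78\sqrt[3]{x}$ from below. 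Since $\sqrt{x}=o(x/\ln^k x)$ this $\sqrt{x}$-correction is harmless for $x$ large; one just needs to check it does not spoil the constant at the stated $x_k$, which is a finite monotonicity check. On the small range $x_k\leqs x\leqs e^{m}$, each inequality is a finite verification: the tables of exact values of $\theta(x)$ announced in Section~2 (Tables~\ref{table_theta1}, \ref{table_theta2}, \ref{table_012}), together with the $\eps_b$ table (Table~\ref{table_eps}), cover it. Here the work is to pick $m$ large enough that the analytic bound already beats $\eta_k/\ln^k x$ by $e^m$, yet small enough that the numerical verification is feasible — the classical Rosser–Schoenfeld compromise.

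The one genuinely new point for the sharper constants (e.g.\ $\eta_2=0.05$ at $x\geqs122\,568\,683$ and $\eta_2=0.01$ at $x\geqs 7.7\cdot 10^9$, and likewise $\eta_3=0.78$, $\eta_4=1300$) is that the purely analytic bound does not yet fall below these thresholds at such moderate $x$; there one must instead chain the result upward through $\psi(x)-\theta(x)$ self-similarly, exactly as in the proof of Proposition~\ref{MinDiffPsi}: apply the weaker already-proved estimate (say $\eta_2=0.2$, valid from $3\,594\,641$) to $\sqrt{x}$, combine with $\psi(x)-\theta(x)\geqs\theta(\sqrt x)\geqs\sqrt x(1-\text{small})$, and bootstrap. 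Each row of the table is thus obtained from the row(s) above it together with one finite numerical check; the entries with smaller $\eta_k$ simply require a larger $x_k$ so that the $\sqrt{x}$-type error terms are negligible at the relative scale $\eta_k/\ln^k x$.

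The main obstacle I anticipate is bookkeeping rather than ideas: making the analytic bound fully explicit with the updated verification height and zero-free constants (so that the constant $\sqrt{8/(17\pi)}$, or whatever replaces it, and the exponent are rigorously justified), and then certifying that the crossover point $e^m$ can actually be reached by the numerical tables for every one of the twelve $(k,\eta_k,x_k)$ triples. In particular the high-$x_k$, low-$\eta_k$ entries force the numerical verification out to fairly large $x$ (into the $10^{10}$ range), and one has to be sure the tables of exact $\theta$ and of $\eps_b$ genuinely cover the gap between $x_k$ and the point where the analytic estimate takes over; the bootstrap via $\psi-\theta$ is what makes this gap finite and closable. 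No single step is deep, but the interlocking of numerical ranges is where care is required.
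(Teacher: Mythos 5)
Your overall architecture matches the paper's: the proof does rest on an explicit Rosser--Schoenfeld-type bound for $|\psi(x)-x|$ (imported from the companion paper as the table of $\eps_b$ values, Table~\ref{table_eps}, and as its Theorem~1.1 for $x\geqs e^{5000}$), transferred to $\theta$ by Proposition~\ref{maj_diff_psi_theta}, and then evaluated interval by interval on $e^{b_i}\leqs x\leqs e^{b_{i+1}}$ by bounding $\ln^k x$ by $b_{i+1}^k$ --- this is exactly the displayed computation producing $\eta_2=0.05$ from $\eps_{\ln 10^{11}}=6.788\cdot 10^{-5}$. The $x\geqs 2$ entries ($\eta_1=1.2323$, $\eta_2=3.965$, $\eta_3=20.83$) are then obtained by locating the worst case at a specific small prime ($11$, $59$, $1423$), which falls under the finite verification you describe.

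There is, however, one genuine misstep, and it is the step you single out as the essential new idea. You propose to obtain the sharp moderate-range entries (e.g.\ $\eta_2=0.05$ from $x\geqs 122\,568\,683$, or $\eta_2=0.01$ from $x\geqs 7.7\cdot 10^{9}$) by bootstrapping through $\psi(x)-\theta(x)\geqs\theta(\sqrt x)$, applying a weaker already-proved $\eta$ at $\sqrt x$. That manoeuvre only subtracts an $O(\sqrt x)$ term from an upper bound for $\psi(x)$; it cannot compensate for the analytic bound on $|\psi(x)-x|$ being an order of magnitude too weak in that range. At $x\approx e^{20}$ one has $\eps_\psi\approx 6\cdot 10^{-4}$, whereas $0.05/\ln^2 x\approx 1.25\cdot 10^{-4}$ and the available $\sqrt x/x$ gain is only about $5\cdot 10^{-5}$: no amount of chaining closes that gap, because the bottleneck is the zero information, not the prime-power correction. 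What actually covers the range from $x_k$ up to $10^{11}$ is exact computation of $\theta$: Section~2 derives exact values of $\theta(x)$ from Del\'eglise--Rivat's exact values of $\psi(x)$, and Table~\ref{table_012} records constants $a_2,b_2$ with $x-a_2x/\ln^2 x\leqs\theta(x)\leqs x+b_2x/\ln^2 x$ on each subinterval up to $8\cdot 10^{11}$. So the ``small range'' in your split is not small at all --- it extends to $10^{11}$ --- and it is handled by brute-force computation rather than by any self-similar analytic argument. Replace your bootstrap by that computational input and your outline coincides with the paper's proof.
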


\begin{proof}
We use the estimates of $|\psi(x)-x|$ with Proposition~\ref{maj_diff_psi_theta}.
In particular, we can choose $\eta_2=0.05$ because
$$\scriptstyle(0.00006788+1.00007/\sqrt{10^{11}}+1.78/(10^{11})^{2/3})*26^2
< 0.04809.$$ We obtain Tables~\ref{table_eta1} \& \ref{table_eta2} step by step up to $b=5000$.
For each line, the value is valid between $b_i$ and $b_{i+1}$. Hence, by example, $\eta_2=4.42E-3$ should be chosed for $x\geqs e^{32}$.

Using Theorem~1.1 of \cite{dusart:eps},
we have $\eta_k\geqs\sqrt{8/\pi}(\sqrt{\ln(x_0)/R})^{1/2}\cdot e^{-\sqrt{\ln(x_0)/R}}\cdot\ln^k(x_0)$ to
obtain for $x\geqs x_0= \exp(5000)$,
\begin{eqnarray*}
\eta_0&=&1.196749447941324988148958471 E-12,\\
\eta_1&=&0.000000005983747239706624940744792353,\\
\eta_2&=& 0.00002991873619853312470372396176,\\
\eta_3&=& 0.1495936809926656235186198088,\\
\eta_4&=&747.9684049633281175930990441.
\end{eqnarray*}

Specials constants:\\
for $x\geqs 1$, $\eta_0<(1-\theta(1^-))/1=(2-\theta(2^-))/2=1$.\\
for $x\geqs 2$, $\eta_1<(11-\theta(11^-))/11\cdot\ln(11)\approx 1.23227674$.\\
for $x\geqs 2$, $\eta_2<(59-\theta(59^-))/59\cdot\ln^2(59)\approx 3.964809$\\
for $x\geqs 2$, $\eta_3<(1423-\theta(1423^-))/1423\cdot\ln^3(1423)\approx 20.8281933$
\end{proof}
\section{Some applications on number theory functions}

\subsection{Estimates of primes}
\subsubsection{Estimates of $\theta(p_k)$}

We have an asymptotic development of $\theta(p_k)$:
$$\theta(p_k)=\li^{-1}(k)+O(k^{1/2}\ln^{3/2}k)$$ whose the first terms by \cite{Cipolla:determinazione} are
$$\theta(p_k)=k\left(\ln k+\ln_2 k-1 +\frac{\ln_2 k-2}{\ln k}-\frac{\ln_2^2k-6\ln_2k+11}{2\ln^2 k}+0\left(\frac{\ln_2^3 k}{\ln^3 k}\right)\right)$$

\begin{remark}
We have
\begin{equation}
\label{maj_theta_pk}
\theta(p_k)\leqs  k\left(\ln k+\ln_2 k-1+\frac{\ln_2 k-2}{\ln k}\right)\quad\mbox{ for } k\geqs 198.
\end{equation}
 by Th.~B(v) of \cite{Massias:NP:effectiv}.
\end{remark}

\begin{prop}
\label{min_theta_pk}
$$\theta(p_k)\geqs  k\left(\ln k+\ln_2 k-1+\frac{\ln_2 k-2.050735}{\ln k}\right)\quad\mbox{ for }p_k\geqs 10^{11}$$
$$\theta(p_k)\geqs  k\left(\ln k+\ln_2 k-1+\frac{\ln_2 k-2.04}{\ln k}\right)\quad\mbox{ for }p_k\geqs 10^{15}$$
\end{prop}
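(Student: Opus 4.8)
\textbf{Proof proposal for Proposition~\ref{min_theta_pk}.}
The plan is to invert the bound $\theta(p_k)\geqs k\ln k$-type control through the known relation between $\theta(p_k)$ and $k$, using $\pi$ estimates. First I would recall that by definition $\pi(\theta^{-1})$-type identities are awkward; instead the cleaner route is to write $\theta(p_k)=p_k+(\theta(p_k)-p_k)$ and bound $\theta(p_k)-p_k$ from below via Theorem~\ref{th:eta_k} (with $k=2$, so $|\theta(p_k)-p_k|<0.2\,p_k/\ln^2 p_k$, valid once $p_k\geqs 3\,594\,641$, which holds under $p_k\geqs 10^{11}$). Then the task reduces to a lower bound for $p_k$ itself of the shape
$$p_k\geqs k\left(\ln k+\ln_2 k-1+\frac{\ln_2 k-c}{\ln k}\right),$$
which in turn follows from an upper bound for $\pi(x)$: if $\pi(x)\leqs x/(\ln x-1.1)$ for $x\geqs 60\,184$ (stated in the introduction and to be proved earlier), then applying this at $x=p_k$ gives $k=\pi(p_k)\leqs p_k/(\ln p_k-1.1)$, hence $p_k\geqs k(\ln p_k-1.1)$, and one bootstraps: insert a first crude estimate $\ln p_k\geqs \ln k+\ln_2 k-1$ (from $p_k\geqs k\ln k$ and monotonicity, cf. \eqref{bound1}) into $\ln p_k-1.1$, then iterate once more to pick up the $\frac{\ln_2 k-2}{\ln k}$ term with a slightly degraded constant.

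Concretely the key steps, in order, are: (i) from the upper bound $\pi(x)\leqs x/(\ln x-1)$ (resp. a sharper variant) deduce $p_k\geqs k(\ln p_k-1)$; (ii) substitute the asymptotic lower bound $\ln p_k\geqs \ln(k\ln k)=\ln k+\ln_2 k$ and then the sharper $\ln p_k\geqs \ln k+\ln_2 k-1+\frac{\ln_2 k - c_0}{\ln k}$ obtained from a preliminary run, to get
$$p_k\geqs k\left(\ln k+\ln_2 k-1+\frac{\ln_2 k-c_1}{\ln k}-O\!\left(\frac{\ln_2^2 k}{\ln^2 k}\right)\right);$$
(iii) absorb the negative $O(\ln_2^2 k/\ln^2 k)$ error, together with the $-0.2\,p_k/\ln^2 p_k$ coming from $\theta(p_k)-p_k\geqs -0.2\,p_k/\ln^2 p_k$, into the linear-in-$1/\ln k$ term by enlarging the constant from $2$ to $2.050735$; this is a monotonicity check that $2.050735-2$ times $k/\ln k$ dominates all the collected lower-order negative terms once $p_k\geqs 10^{11}$. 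The second, sharper assertion with constant $2.04$ follows identically but restarting the bootstrap from $p_k\geqs 10^{15}$, where the discarded $O(\ln_2^2 k/\ln^2 k)$ and $\eta_2$-terms are proportionally smaller, so a smaller slack suffices; alternatively one may use a sharper $\pi(x)$ upper bound (the $x/(\ln x - 1.1)$ one, or the three-term one with $2.334$) valid on that larger range.

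The main obstacle I anticipate is controlling the chain of substitutions for $\ln p_k$ with fully explicit constants rather than $O(\cdot)$: each time one replaces $\ln p_k$ inside $\frac{1}{\ln p_k}$ by a lower estimate $L(k)$, one incurs an error governed by $\frac{1}{L(k)}-\frac{1}{\ln p_k}$, and these must be bounded above explicitly and shown to be swallowed by the gap $\frac{2.050735-2}{\ln k}$ (resp. $\frac{2.04-2}{\ln k}$) for all $p_k$ in the stated range. This is a finite amount of elementary but delicate calculus of the function $t\mapsto \ln k+\ln_2 k-1+\frac{\ln_2 k-c}{\ln k}-\frac{k}{?}$; practically it comes down to checking that a certain explicit one-variable function of $u=\ln k$ (with $\ln_2 k = \ln u$) is positive for $u\geqs \ln(\pi(10^{11}))$, which can be done by estimating its derivative and evaluating at the left endpoint. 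The boundary case near $p_k=10^{11}$ — i.e. $k\approx 4\,118\,054\,813$ — should be verified directly, since that is where the inequality is tightest.
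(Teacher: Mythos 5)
There is a genuine gap, and it is quantitative rather than structural: your chain of inequalities cannot reach the constant $2.050735$. You propose to prove first a lower bound for $p_k$ (by inverting an upper bound for $\pi$) and then pass to $\theta(p_k)$ via $\theta(p_k)\geqs p_k-\eta_2\,p_k/\ln^2 p_k$. But the dependency runs the other way in this paper: the sharp lower bound for $p_k$ (Proposition with constant $2.1$) is \emph{deduced from} Proposition~\ref{min_theta_pk}, precisely because going from $\theta(p_k)$ to $p_k$ (or back) costs an additive $\eta_2\,k/\ln k$ each time. Concretely: inverting $\pi(x)\leqs x/(\ln x-1.1)$ gives $p_k\geqs k(\ln p_k-1.1)$, whose constant term after substituting $\ln p_k\geqs\ln k+\ln_2 k+\frac{\ln_2 k-1}{\ln k}-\cdots$ is $-1.1k$, a deficit of $0.1k$ against the target's $-k$; this is an order-$k$ loss, not a $k/\ln k$ loss, and for $\ln k>9.5$ the resulting bound lies strictly \emph{below} $k\bigl(\ln k+\ln_2 k-1+\frac{\ln_2 k-2.05}{\ln k}\bigr)$, so it cannot imply it. Inverting instead the three-term bound $\pi(x)\leqs\frac{x}{\ln x}\bigl(1+\frac{1}{\ln x}+\frac{2.334}{\ln^2 x}\bigr)$ does fix the $-1$ but yields at best $p_k\geqs k\bigl(\ln k+\ln_2 k-1+\frac{\ln_2 k-c}{\ln k}\bigr)$ with $c\gtrsim 2.334$ (that $2.334$ already has an $\eta_2$ baked into it); paying a second $\eta_2\approx 0.05$ to descend to $\theta(p_k)$ lands you near $c\approx 2.39$, far from $2.050735$. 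No enlargement "from $2$ to $2.050735$" absorbs these losses, because the slack available is only $0.05$.

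The paper avoids this by never inverting $\pi$: it writes $\theta(p_k)-\theta(p_{k_0})=\sum_{n=k_0+1}^{k}\ln p_n\geqs\sum_{n=k_0+1}^{k}\ln h_a(n)$ with $h_a(n)=n(\ln n+\ln_2 n-a)$ and the crude $a=1$ (from \cite{dusart:pk}), then shows $f_\beta'\leqs\ln h_a$ for a suitable $\beta$ so that the sum dominates $f_\beta(k)-f_\beta(k_0)$. The point is that summing the \emph{logarithms} gains an order: a lower bound for $p_n$ accurate only to $O(n)$ already produces a lower bound for $\theta(p_k)$ with the correct $k\,\frac{\ln_2 k-\beta}{\ln k}$ term and $\beta$ close to $2$. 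Only then does the paper bootstrap, feeding $p_k\geqs\theta(p_k)-\eta_2 k/\ln k$ back in to improve $a$ and iterate down to $\beta=2.050735$. If you want to salvage your write-up, you must replace steps (i)--(ii) by this summation-and-comparison argument (or an equivalent partial-summation identity for $\theta(p_k)$); the $\eta_2$ bookkeeping in your step (iii) is the only part that survives, and it appears in the paper only inside the bootstrap, not as the final passage from $p_k$ to $\theta(p_k)$.
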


\begin{proof}
Let $f_\beta$ defined by
$$n\mapsto n\left(\ln n+\ln_2 n-1+\frac{\ln_2 n-\beta}{\ln n}\right).$$
We want to prove that $\theta(p_n)\geqs f_\beta(n)$.
Define $h_a$ by $h_a(n):=n\left(\ln n+\ln_2 n -a\right).$
Suppose there exist $a$ such that $p_k\geqs h_a(k)$ for $k\geqs k_0$.
Hence 
$$\theta(p_k)-\theta(p_{k_0})=\sum_{n=k_0+1}^k\ln p_n\geqs \sum_{n=k_0+1}^k\ln h_a(n).$$
We have $f'_\beta\leqs \ln h_a$ if
\begin{equation}
\label{eq:beta2}
\frac{\ln_2 n-\beta+1}{\ln n}-\frac{\ln_2 n-\beta-1}{\ln^2 n}\leqs \ln\left(1+\frac{\ln_2 n-a}{\ln n}\right).
\end{equation}

We can rewrite (\ref{eq:beta2}) as
\begin{equation}
\label{eq:beta3}
\beta(1-1/\ln k)\geqs 1+\ln_2 k-\ln\left(1+\frac{\ln_2 k -a}{\ln k}\right)\ln k -\frac{\ln_2 k -1}{\ln k}.
\end{equation}
For $a\in[0.95,1]$ and  $t\geqs 22$, the function 
$t\mapsto (\ln t-t\ln\left(1+\frac{\ln t -a}{t}\right) -\frac{\ln t -1}{t})/(1-1/t)$ is decreasing.
 
By \cite{dusart:pk}, we can choose $a=a_0=1$. 
For $k\geqs e^{100}$, the value $\beta=2.048$ satisfies (\ref{eq:beta3}).

For $\pi(10^{11})\leqs k \leqs e^{100}$, the value $\beta_0=2.094$ satisfies (\ref{eq:beta3}). 
Hence
$$\theta(p_k)\geqs k\left(\ln k+\ln_2 k-1+\frac{\ln_2 k-\beta_0}{\ln k}\right).$$
Then
$p_k\geqs \theta(p_k)-\eta_2\frac{k}{\ln k}$ by (\ref{th:eta_k}) \& (\ref{bound1}), hence $p_k\geqs h_{a_1}(k)$ with $a_1=1-\frac{\ln_2 k-(\beta_0+\eta_2)}{\ln k}$.
Splitting the interval of $k$, we use different values of $a$ 
with adapted values of $\eta_2$. By itering the process, we obtain $\beta=2.050735$ for $k\geqs k_0=\pi(10^{11})$. This value of $\beta$ verifies $\theta(p_{k_0})\geqs f_{\beta}(k_0)$.

By same way, we obtain $\beta=2.038$ for $k\geqs 10^{15}$.
\end{proof}


\begin{prop}
\label{prop:upper:thetaPk}
For $k\geqs 781$,
$$\theta(p_k)\leqs k\left(\ln k+\ln_2 k-1 +\frac{\ln_2 k-2}{\ln k}-\frac{0.782}{\ln^2 k}\right)$$
\end{prop}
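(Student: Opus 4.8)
\textbf{Proof proposal for Proposition~\ref{prop:upper:thetaPk}.}

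The plan is to mirror the structure of the proof of Proposition~\ref{min_theta_pk}, but now producing an \emph{upper} bound for $\theta(p_k)$ rather than a lower one. Set $g_{\beta,\gamma}(n):=n\bigl(\ln n+\ln_2 n-1+\tfrac{\ln_2 n-\beta}{\ln n}-\tfrac{\gamma}{\ln^2 n}\bigr)$; we want $\theta(p_k)\leqs g_{2,0.782}(k)$. First I would secure an explicit \emph{upper} bound $p_k\leqs H_a(k):=k(\ln k+\ln_2 k-a)$ valid for $k\geqs k_0$; the estimate $p_k\leqs k(\ln k+\ln_2 k-1)$ for $k\geqs 688\,383$ announced in the introduction (or a sharper value of $a$ on a subinterval, derived from Theorem~\ref{th:eta_k} together with \eqref{maj_theta_pk} and \eqref{bound1}) serves here. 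Then, exactly as before, write $\theta(p_k)-\theta(p_{k_0})=\sum_{n=k_0+1}^{k}\ln p_n\leqs \sum_{n=k_0+1}^{k}\ln H_a(n)$ and compare the sum against the integral: it suffices that $g'_{\beta,\gamma}(n)\geqs \ln H_a(n+1)$, or more conveniently $g'_{\beta,\gamma}(n)\geqs \ln H_a(n)$ after absorbing the one-step discrepancy, so that $g_{\beta,\gamma}$ dominates the telescoped sum once it does so at the single base point $k_0$.

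The analytic heart is the differential inequality. Computing $g'_{\beta,\gamma}(n)$ and $\ln H_a(n)=\ln n+\ln\bigl(1+\tfrac{\ln_2 n-a}{\ln n}\bigr)$, I would expand $\ln\bigl(1+\tfrac{\ln_2 n-a}{\ln n}\bigr)$ and, writing $L=\ln n$ and $\ell=\ln_2 n$, reduce the required inequality to a statement of the shape
\begin{equation*}
\beta\Bigl(1-\frac1L\Bigr)+\frac{2\gamma}{L}\cdot\frac1L(\text{lower-order})\;\leqs\; 1+\ell-L\ln\!\Bigl(1+\frac{\ell-a}{L}\Bigr)-\frac{\ell-1}{L}+(\text{terms in }\gamma),
\end{equation*}
i.e.\ the same Massias-type function $t\mapsto\bigl(\ln t-t\ln(1+\tfrac{\ln t-a}{t})-\tfrac{\ln t-1}{t}\bigr)/(1-1/t)$ as in \eqref{eq:beta3}, now used to produce a lower bound for the admissible $\beta$ (with the $\gamma/\ln^2 n$ term giving a favourable contribution, so that a $\gamma$ slightly below the limiting value is safe). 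Using the monotonicity already recorded after \eqref{eq:beta3}, the worst case is at the left endpoint of each range of $n$; as $n\to\infty$ the admissible $(\beta,\gamma)$ tends to $(2,1)$, so any pair with $\beta=2$ and $\gamma<1$, in particular $\gamma=0.782$, is eventually fine, and one pins down where $k\geqs 781$ becomes enough by checking the inequality at that endpoint explicitly.

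Finally I would handle the range $781\leqs k< k_0$ (the large but finite stretch before the clean asymptotic bound on $p_k$ takes over) by the same iterative splitting used in Proposition~\ref{min_theta_pk}: on successive subintervals choose the best available $a$ from the tables in Theorem~\ref{th:eta_k}, propagate an improved bound on $p_k$, and verify \eqref{eq:beta3}-type inequalities numerically, together with a direct check of the base inequality $\theta(p_{781})\leqs g_{2,0.782}(781)$ and a finite numerical verification for the smallest values. The main obstacle I anticipate is not the asymptotics but exactly this bookkeeping: making the $\gamma$-term inequality tight enough that $\gamma=0.782$ survives all the way down to $k=781$ while only using upper bounds on $p_k$ that are actually available in that range, which is why the iterative refinement of $a$ (and hence of the implied constant) is needed rather than a single clean estimate.
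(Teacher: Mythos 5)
Your overall architecture (telescope $\theta(p_k)=\sum\ln p_n$, bound $\ln p_n$ by the logarithm of an explicit upper bound for $p_n$, compare with the derivative of the target function, anchor at a base point, and finish with a finite computation) is exactly the skeleton of the paper's Lemma~\ref{lem:upper:thetaPk}. But the input you feed into it creates a genuine gap. You propose to use a \emph{first-order} upper bound $p_n\leqs H_a(n)=n(\ln n+\ln_2 n-a)$, and the specific instance you cite, $p_k\leqs k(\ln k+\ln_2 k-1)$ for $k\geqs 688\,383$, is false and is not what the introduction announces (the announced bound carries the extra term $\frac{\ln_2 k-2}{\ln k}$, which is positive in that range; indeed the paper's own lower bound $p_k\geqs k(\ln k+\ln_2 k-1+\frac{\ln_2 k-2.1}{\ln k})$ exceeds $k(\ln k+\ln_2 k-1)$ once $\ln_2 k>2.1$). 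More structurally: any \emph{constant} $a$ for which $H_a$ is a valid upper bound on an unbounded range must satisfy $a<1$, so $\ln H_a(n)=\ln n+\ln_2 n+\frac{\ln_2 n-a}{\ln n}+\cdots$ overshoots the second-order term $\frac{\ln_2 n-1}{\ln n}$ of the target derivative by $\frac{1-a}{\ln n}$. Summed over $n\leqs k$ this contributes an error of order $(1-a)\,k/\ln k$, a full order of magnitude above the $k/\ln^2 k$ term you are trying to extract; the ``same Massias-type reduction as in \eqref{eq:beta3}'' therefore does not go through, because the second-order terms no longer cancel. Patching this with piecewise-constant $a$'s would require $1-a=O(1/\ln k)$ on each block, which is just a disguised way of reinstating the second-order term.

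The paper avoids this by using the \emph{second-order} upper bound of Lemma~\ref{lem:upper:pk}, $p_n\leqs S_a(n)=n\bigl(\ln n+\ln_2 n-1+\frac{\ln_2 n-a}{\ln n}\bigr)$ with $a=1.95$ (itself obtained from Massias--Robin's bound \eqref{maj_theta_pk} and $\eta_2=0.05$). Then $\ln S_a(n)=\ln n+\ln_2 n+\ln(1+u(n))$ with $u(n)=\frac{\ln_2 n-1}{\ln n}+\frac{\ln_2 n-a}{\ln^2 n}$, the first-order parts match exactly, and the whole game is played at order $1/\ln^2 n$ using the quadratic estimate $\ln(1+u)\leqs u-\beta u^2$ with $\beta=\frac{u(k_0)-\ln(1+u(k_0))}{u(k_0)^2}\approx 0.4613$ at $k_0=178\,974$. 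The constant $0.782$ is not a limit of admissible $\gamma$'s tending to $1$; it is pinned down by the explicit relation $\gamma=a+1-1/\beta$ arising from the discriminant conditions on the resulting quadratics in $\ln_2 n$. So the missing idea in your proposal is precisely the availability and use of a second-order upper bound for $p_n$; without it the claimed inequality cannot be reached by this method.
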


\begin{proof}
Use Lemma~\ref{lem:upper:pk} and Lemma~\ref{lem:upper:thetaPk}.
\end{proof}

\begin{lemma}
\label{lem:upper:thetaPk}
Let two integers $k_0,k$ and $\gamma>0$ real.
Suppose that for $k_0\leqs n \leqs k$,
$$p_n\leqs n\left(\ln n+\ln_2 n-1+\frac{\ln_2 n-1.95}{\ln n}\right).$$
Let $s(k)=k\left(\ln k+\ln_2 k-1 +\frac{\ln_2 k-2}{\ln k}-\frac{\gamma}{\ln^2 k}\right)$.
Let $f(k)=s(k)-(\ln k +\ln_2 k+1).$ If $\theta(p_{k_0-1})\leqs f(k_0)$ then $\theta(p_k)\leqs s(k)$ for all $k\geqs k_0$.
\end{lemma}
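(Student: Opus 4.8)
The statement is an induction-with-telescoping argument of exactly the same shape as the proof of Proposition 5.7 (the lower-bound companion), so I would mirror that structure. The plan is to show that the hypothesis $\theta(p_{k_0-1})\leqs f(k_0)$ propagates: if $\theta(p_{k-1})\leqs f(k)$ then $\theta(p_k)\leqs f(k+1)$, and separately that $f(k)\leqs s(k)$ always (which is immediate since $\ln k+\ln_2 k+1>0$). Combining the propagated inequality $\theta(p_{k-1})\leqs f(k)$ with $\theta(p_k)=\theta(p_{k-1})+\ln p_k$ and the hypothesised upper bound on $p_k$ gives $\theta(p_k)\leqs f(k)+\ln p_k$; then one needs $f(k)+\ln p_k\leqs f(k+1)$, i.e. $\ln p_k\leqs f(k+1)-f(k)$. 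Since $f(k+1)-f(k)=s(k+1)-s(k)-(\ln(k+1)-\ln k)-(\ln_2(k+1)-\ln_2 k)$, and the last two differences are positive, it suffices to prove the cleaner inequality $\ln p_k+\ln\!\big(\tfrac{k+1}{k}\big)+\big(\ln_2(k+1)-\ln_2 k\big)\leqs s(k+1)-s(k)$, which I would in turn weaken to $\ln s(k)\leqs s(k+1)-s(k)$ after bounding $\ln p_k$ by $\ln\big(n(\ln n+\ln_2 n-1+\tfrac{\ln_2 n-1.95}{\ln n})\big)\le \ln s(k)$ using the hypothesis on $p_n$ and the fact that the argument of the first logarithm is $\le s(k)/k \cdot k$ up to the $1/\ln^2$ term.

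**The analytic core.** The real content is the differential inequality $s'(k)\geqs \ln s(k)$ for $k\ge k_0$, or rather its discrete form $s(k+1)-s(k)\ge \ln s(k)$, because $s(k+1)-s(k)=\int_k^{k+1}s'(t)\,dt\ge s'(k)$ when $s'$ is increasing (or $\ge s'(k+1)$ otherwise — one picks the convenient bound after checking monotonicity of $s'$ on $[k_0,\infty)$). Here $s(t)=t\big(\ln t+\ln_2 t-1+\tfrac{\ln_2 t-2}{\ln t}-\tfrac{\gamma}{\ln^2 t}\big)$, so $\ln s(t)=\ln t+\ln\!\big(\ln t+\ln_2 t-1+\cdots\big)=\ln t+\ln_2 t+\ln\!\big(1+\tfrac{\ln_2 t-1}{\ln t}+O(1/\ln^2 t)\big)$, while a direct computation gives $s'(t)=\ln t+\ln_2 t-1+\tfrac{\ln_2 t-2}{\ln t}+1+\tfrac{1}{\ln t}+\tfrac{1}{\ln t}\cdot(\text{lower order})+\cdots = \ln t+\ln_2 t+\tfrac{\ln_2 t-1}{\ln t}+O(1/\ln^2 t)$. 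Comparing $s'(t)$ with $\ln s(t)$ reduces, after expanding $\ln(1+u)=u-u^2/2+\cdots$, to an inequality of the form $\tfrac{(\ln_2 t-1)^2}{2\ln^2 t}+\cdots \ge 0$ plus controlled error terms involving $\gamma$; this is where the constant $\gamma$ (and hence, in Proposition 5.9, the value $0.782$) gets pinned down. I would state it as: there is an explicit $t_1$ such that for $t\ge t_1$ and the given $\gamma$, $s'(t)\ge \ln s(t)+\ln\!\big(\tfrac{t+1}{t}\big)+\big(\ln_2(t+1)-\ln_2 t\big)$, with the small correction terms on the right absorbed by the positive $(\ln_2 t-1)^2/(2\ln^2 t)$ slack.

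**Assembling and the finite check.** With the differential inequality in hand, the induction closes: $\theta(p_k)\le f(k)+\ln p_k\le f(k)+\ln s(k)\le f(k)+\big(s(k+1)-s(k)-\ln\tfrac{k+1}{k}-(\ln_2(k+1)-\ln_2 k)\big)=f(k+1)\le s(k+1)$, so the bound $\theta(p_k)\le s(k)$ holds for every $k\ge k_0$ once it holds at $k_0$ (the base case $\theta(p_{k_0})\le s(k_0)$ itself following from $\theta(p_{k_0})=\theta(p_{k_0-1})+\ln p_{k_0}\le f(k_0)+\ln p_{k_0}\le f(k_0)+\ln s(k_0)\le s(k_0)$). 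Note that for this step I only need the hypothesised inequality $p_n\le n(\ln n+\ln_2 n-1+\tfrac{\ln_2 n-1.95}{\ln n})$ to bound $\ln p_n\le \ln s(n)$ — the $1.95$ versus $2$ gap and the absence of the $\gamma/\ln^2$ term only help, since they make $p_n$ smaller than $s(n)/n\cdot n$ in the relevant range. The remaining obstacle is purely computational: the differential-inequality estimate is only valid for $t\ge t_1$ for some concrete $t_1$ (likely well below the $k_0=781$ of Proposition 5.9, or below whatever $k_0$ the calling lemma supplies), so one must separately verify $s'(t)\ge \ln s(t)+\ln\tfrac{t+1}{t}+(\ln_2(t+1)-\ln_2 t)$ for the finitely many integers between the relevant $k_0$ and $t_1$ by direct numerical evaluation. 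I expect that finite interval check, together with getting the error-term bookkeeping in the $\ln(1+u)$ expansion tight enough to clear the chosen $\gamma$, to be the only delicate part; everything else is the standard telescoping bootstrap already used for Proposition 5.7.
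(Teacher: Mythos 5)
Your overall skeleton (telescoping the hypothesis $\theta(p_{k_0-1})\leqs f(k_0)$ forward by comparing $\ln p_n$ with increments of $f$, then converting $f$ to $s$ at the last step) is the same as the paper's, which compares $\sum_{n=k_0}^{k-1}\ln S_a(n)$ with $\int_{k_0}^k f'(n)\,dn$ where $S_a(n)=n(\ln n+\ln_2 n-1+\frac{\ln_2 n-a}{\ln n})$ and $a=1.95$. But your reduction contains a direction error at exactly the order that determines $\gamma$. You claim that the hypothesised bound on $p_n$ is at most $s(n)$, writing that ``the $1.95$ versus $2$ gap and the absence of the $\gamma/\ln^2$ term only help, since they make $p_n$ smaller than $s(n)$.'' This is backwards: since $-1.95>-2$, one has
$$n\left(\ln n+\ln_2 n-1+\frac{\ln_2 n-1.95}{\ln n}\right)\;>\;n\left(\ln n+\ln_2 n-1+\frac{\ln_2 n-2}{\ln n}-\frac{\gamma}{\ln^2 n}\right)=s(n),$$
so the hypothesis gives $\ln p_n\leqs\ln S_{1.95}(n)$ but emphatically not $\ln p_n\leqs\ln s(n)$. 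The excess $\frac{0.05}{\ln n}+\frac{\gamma}{\ln^2 n}$ that you discard is precisely what the $-\gamma/\ln^2 k$ term in $s$ exists to absorb; replacing the target inequality $\ln S_{1.95}(n)\leqs f'(n)$ by the weaker-looking $\ln s(n)\leqs s(n+1)-s(n)$ changes the admissible $\gamma$ and would not yield $0.782$.

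A second, related problem is your Taylor slack. You take the second-order term of $\ln(1+u)$ with coefficient $1/2$, i.e.\ the slack $(\ln_2 t-1)^2/(2\ln^2 t)$. But the inequality needed is an upper bound $\ln(1+u)\leqs u-\beta u^2$, which is \emph{false} with $\beta=1/2$ for $u>0$ (since $\ln(1+u)>u-u^2/2$); one must take $\beta=\bigl(u(k_0)-\ln(1+u(k_0))\bigr)/u(k_0)^2<1/2$, which at $k_0=178974$ gives $\beta\approx0.4613$. The paper's constant comes from $\gamma=a+1-1/\beta$ with $a=1.95$, i.e.\ $\gamma\approx0.78217$; with $\beta=1/2$ you would wrongly obtain $\gamma=0.95$. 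So while your discrete induction (using $f(k+1)-f(k)$ in place of $\int_k^{k+1}f'$) is a legitimate variant of the paper's argument, both of the quantitative steps that actually pin down $\gamma$ are carried out with the inequality pointing the wrong way, and the proof as written does not establish the lemma for the stated $\gamma$.
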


\begin{proof}
Let $S_a(n)$ be an upper bound for $p_n$ for $k_0\leqs n\leqs k$ where
$$S_a(n)=n\left(\ln n+\ln_2 n-1+\frac{\ln_2 n-a}{\ln n}\right).$$
Now, for $2\leqs k_0\leqs k$, we write
$$\theta(p_{k-1})-\theta(p_{k_0}-1)=\sum_{n=k_0}^{k-1}\ln p_n\leqs\sum_{n=k_0}^{k-1}\ln S_a(n)\leqs\int_{k_0}^k\ln S_a(n)dn.$$

We need to prove that $\ln S_a(n)\leqs f'(n)$.

We have $$\ln S_a(n)= \ln n+\ln_2 n+\ln(1+u(n))$$ with $u(n)=\frac{\ln_2 n-1}{\ln n}+\frac{\ln_2 n-a}{\ln^2 n}$ and
$$f'(n)=\ln n+\ln_2 n+\frac{\ln_2 n-1}{\ln n}-\frac{\ln_2 n+\gamma-3}{\ln^2 n}+\frac{2\gamma}{\ln^3 n}-\frac{1}{n}(1+1/\ln n).$$

Let $\beta<1/2$ such that $\ln(1+u(n))\leqs u(n)-\beta u^2(n)$ for $n\geqs k_0$.
Then $\ln S_a(n)\leqs f'(n)$ if
$$\beta\left(\frac{\ln_2 n-1}{\ln n}+\frac{\ln_2 n-a}{\ln^2 n}\right)^2 -\frac{2\ln_2 n+\gamma-3-a}{\ln^2 n}+2\gamma/\ln^3 n-1/n-1/(n\ln n)\geqs 0,$$ that we can simplify in
$$\frac{A}{\ln^2 n}+2\frac{B}{\ln^3 n}+\beta\frac{\ln_2^2 n -2a\ln_2 n+a^2}{\ln^4 n}-1/n-1/(n\ln n)\geqs 0$$
where 
$$A=\beta\ln_2^2 n-2(\beta+1)\ln_2 n+3+a+\beta-\gamma$$
$$B=\beta\ln_2^2 n-\beta(a+1)\ln_2 n+a\beta+\gamma$$
We have $1/n+1/(n\ln n)\leqs 0.02/\ln^3 n$ for $n\geqs 10^5$.

We study each parts, denoting $\ln_2 n$ by $X$:
\begin{itemize}
\item[$\bullet$] $\beta X^2-2(\beta+1) X +3+a+\beta-\gamma\geqs 0$ for all $X$ if $\gamma-a-1+1/\beta\leqs 0$,
\item[$\bullet$] $X^2-(a+1)X+ (a+\gamma/\beta+0.02)\geqs 0$ for all $X$ if $a^2-2a+1-4(\gamma/\beta+0.02)\leqs 0$,
\item[$\bullet$] $X^2-2aX+a^2=(X-a)^2\geqs 0$.
\end{itemize}

We choose $\gamma$ such that $\gamma-a-1+1/\beta=0$.
We choose $\beta=\frac{u(k_0)-\ln(1+u(k_0))}{u^2(k_0)}$.
With $a=1.95$ and $k_0=178974$, we have $\beta=0.461291475\cdots$ and $\gamma=0.78217325\cdots$.

Hence
$\theta(p_{k-1})-f(k)\leqs \theta(p_{k_0}-1)-f(k_0)$.
As $\theta(p_{k_0}-1)\leqs f(k_0)$, we have $\theta(p_{k-1})-f(k)\leqs 0$.
We obtain the upper bound
$\theta(p_k)=\theta(p_{k-1})+\ln p_k \leqs f(k)+\ln p_k<s(k)$ by (\ref{bound2}).

\end{proof}

\subsubsection{Estimates of $p_k$}

\begin{lemma}
\label{lem:upper:pk}
For $k\geqs 178\,974$,
$$p_k\leqs k\left(\ln k+\ln_2 k-1+\frac{\ln_2 k-1.95}{\ln k}\right).$$
\end{lemma}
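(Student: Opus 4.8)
The plan is to bootstrap from the explicit upper bound on $\theta(x)$ provided by Theorem~\ref{th:eta_k} into an upper bound on $p_k$, converting an inequality about $\theta$ near $x=p_k$ into an inequality about the index $k=\pi(p_k)$. First I would record that, by Theorem~\ref{th:eta_k} (the line $k=2$, $\eta_2=0.05$, $x_2=3\,594\,641$, or a sharper line for larger $x$), we have $\theta(x)\geqslant x-0.05\,x/\ln^2 x$ for $x$ in the relevant range, so that for $x=p_k$ with $p_k$ large enough one gets $k\geqslant\theta(p_k)/(\text{something})$ — more precisely I want a lower bound on $\pi(x)$ or, equivalently, an inequality of the shape $p_k\leqslant g(k)$ for an explicit $g$. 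The natural route is: assume for contradiction $p_k>F(k)$ where $F(k)=k(\ln k+\ln_2 k-1+(\ln_2 k-1.95)/\ln k)$, and derive $\theta(p_k)\geqslant\theta(F(k))\geqslant F(k)-0.05\,F(k)/\ln^2 F(k)$; then compare this with an upper bound for $\theta(p_k)$ in terms of $k$.

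The cleaner implementation, and the one I expect the author uses, is a differential/summation bootstrap parallel to the proof of Lemma~\ref{lem:upper:thetaPk} and Proposition~\ref{min_theta_pk}: set $g_a(k)=k(\ln k+\ln_2 k-1+(\ln_2 k-a)/\ln k)$ and try to show $p_k\leqslant g_a(k)$ for a shrinking sequence of values $a$. The mechanism is that if $p_n\leqslant g_a(n)$ holds on $[k_0,k-1]$, then
\begin{equation*}
\theta(p_{k-1})=\theta(p_{k_0-1})+\sum_{n=k_0}^{k-1}\ln p_n\leqslant \theta(p_{k_0-1})+\int_{k_0}^{k}\ln g_a(n)\,dn,
\end{equation*}
and one checks that $\ln g_a(n)$ is dominated by the derivative of a primitive $G(k)$ chosen so that $G(k)-(\ln k+\ln_2 k+1)$ exceeds the lower bound $\theta(p_k)\geqslant p_k-0.05\,p_k/\ln^2 p_k$ wants; combining gives $p_k\leqslant G(k)$, which is of the form $g_{a'}(k)$ with $a'$ slightly larger (closer to the truth) than $a$. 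Iterating this — "splitting the interval of $k$ and using different values of $a$ with adapted values of $\eta_2$", exactly as in the proof of Proposition~\ref{min_theta_pk} — drives $a$ down toward $1.95$ on $[178\,974,\infty)$. I would also need a base case: verify $p_k\leqslant g_{1.95}(k)$ directly for $k$ from $178\,974$ up to some moderate bound (using tables of primes, or the weaker known bounds such as those of Rosser--Schoenfeld / \cite{dusart:pk}), which pins down $k_0$ and the starting value of $a$.

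Concretely the steps, in order: (1) fix notation $g_a$ and the target $F(k)=g_{1.95}(k)$; (2) state the inequality $\theta(y)\geqslant y-0.05\,y/\ln^2 y$ for $y\geqslant 3\,594\,641$ from Theorem~\ref{th:eta_k}, and note one may use sharper $\eta_2$ on larger ranges; (3) establish the recursive step: if $p_n\leqslant g_a(n)$ on an interval, then $\theta(p_k)\leqslant$ (the integral bound above), while $\theta(p_k)\geqslant p_k-\eta_2 p_k/\ln^2 p_k$, and the calculus inequality $\ln g_a(n)\leqslant f'(n)$ (with $f$ the appropriate primitive minus $\ln k+\ln_2 k+1$, handled by a $\ln(1+u)\leqslant u-\beta u^2$ expansion exactly as in Lemma~\ref{lem:upper:thetaPk}) yields $p_k\leqslant g_{a'}(k)$ with a better $a'$; (4) run the iteration with interval-splitting down to $a=1.95$; (5) dispatch the base interval $k_0\leqslant k\leqslant(\text{bound})$ by direct computation. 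The main obstacle is step (3)–(4): verifying that the calculus inequality $\ln g_a(n)\leqslant f'(n)$ actually holds over the whole tail (monotonicity in $n$, sign of the quadratic in $X=\ln_2 n$, and absorbing the $1/n$ and $1/(n\ln n)$ terms into $O(1/\ln^3 n)$), and arranging the bookkeeping so that the sequence of $a$-values provably converges to $1.95$ rather than stalling above it — this is where the choice of $\beta=\bigl(u(k_0)-\ln(1+u(k_0))\bigr)/u^2(k_0)$ and the relation $\gamma-a-1+1/\beta=0$ from the previous lemma get reused and must be checked to close up at $a=1.95$, $k_0=178\,974$.
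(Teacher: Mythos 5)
Your proposal takes a genuinely different, and much heavier, route than the paper. The paper's proof is three lines: substitute $x=p_k$ into $|\theta(x)-x|\leqs\eta_2 x/\ln^2 x$ (Theorem~\ref{th:eta_k}), use (\ref{bound1}) ($p_k\leqs k\ln p_k$) to get $|p_k-\theta(p_k)|\leqs\eta_2 k/\ln k$, and add this to the upper bound (\ref{maj_theta_pk}), namely $\theta(p_k)\leqs k\left(\ln k+\ln_2 k-1+\frac{\ln_2 k-2}{\ln k}\right)$ for $k\geqs 198$, which is not rederived but quoted from Massias--Robin (Th.~B(v)). With $\eta_2=0.05$ the constant $-2+\eta_2=-1.95$ appears at once; the entire content of the lemma is the identity $1.95=2-\eta_2$ plus an off-the-shelf bound on $\theta(p_k)$. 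Your opening paragraph is actually pointed in this direction (``compare with an upper bound for $\theta(p_k)$ in terms of $k$''), but you then discard it for the summation/derivative-comparison bootstrap of Lemma~\ref{lem:upper:thetaPk} and Proposition~\ref{min_theta_pk}. That machinery could in principle reprove the statement, but note two things: first, a circularity risk --- Lemma~\ref{lem:upper:thetaPk} takes the conclusion of the present lemma as its hypothesis, so your iteration must be seeded from an independent bound (e.g.\ Rosser--Schoenfeld's $p_k<k(\ln k+\ln_2 k-1/2)$ or the Massias--Robin inequality itself), and the bookkeeping needed to drive $a$ down to exactly $1.95$ is nontrivial; second, even in your scheme the final passage from a bound on $\theta(p_k)$ to a bound on $p_k$ is precisely the paper's step $p_k\leqs\theta(p_k)+\eta_2 k/\ln k$, and once that step is combined with the already-quoted (\ref{maj_theta_pk}) the proof is finished without any integral comparison. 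What your approach would buy is independence from the external Massias--Robin reference; what it costs is all the analysis the paper deliberately reserves for the genuinely harder estimates where no such reference is available.
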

\begin{proof}
Substituing $x$ by $p_k$ in  $|\theta(x)-x|\leqs \eta_2\frac{x}{\ln^2 x}$,
we obtain
$$|p_k- \theta(p_k)|\leqs\eta_2\frac{p_k}{\ln^2 p_k}.$$
By (\ref{bound1}), we have
$\frac{p_k}{\ln^2 p_k}\leqs \frac{k}{\ln k}$ and 
\begin{equation}
\label{diff_theta_pk_pk}
|p_k - \theta(p_k)|\leqs \eta_2\frac{k}{\ln k}.
\end{equation}
Using the upper bound (\ref{maj_theta_pk}) of $\theta(p_k)$,
we have
$$p_k\leqs k\left(\ln k+\ln_2 k-1+\frac{\ln_2 k-2+\eta_2}{\ln k}\right).$$
We use $\eta_2=0.05$ for $x\geqs 10^{11}$.
\end{proof}

\begin{prop}
\label{prop:upper:pk}
For $k\geqs 688\,383$,
$$p_k\leqs k\left(\ln k+\ln_2 k-1+\frac{\ln_2 k-2}{\ln k}\right).$$
\end{prop}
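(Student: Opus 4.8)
The plan is to bound $p_k$ through the decomposition $p_k=\theta(p_k)+(p_k-\theta(p_k))$, estimating the first term with the upper bound for $\theta(p_k)$ just proved and the second with the explicit prime number theorem error for $\theta$.

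First I would invoke Proposition~\ref{prop:upper:thetaPk} (applicable since $688\,383\geqs 781$):
$$\theta(p_k)\leqs k\left(\ln k+\ln_2 k-1+\frac{\ln_2 k-2}{\ln k}-\frac{0.782}{\ln^2 k}\right),$$
so there is a margin of $0.782\,k/\ln^2 k$ below the target. To absorb the remainder I would use Theorem~\ref{th:eta_k} at $x=p_k$, $|\theta(p_k)-p_k|<\eta_2\,p_k/\ln^2 p_k$, together with $p_k\leqs k\ln p_k$ from~(\ref{bound1}) and $\ln p_k\geqs\ln k$ --- this is exactly inequality~(\ref{diff_theta_pk_pk}), $p_k-\theta(p_k)\leqs\eta_2\,k/\ln k$. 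Adding the two estimates gives
$$p_k\leqs k\left(\ln k+\ln_2 k-1+\frac{\ln_2 k-2}{\ln k}\right)+\frac{k}{\ln^2 k}\bigl(\eta_2\ln k-0.782\bigr),$$
so the proposition holds at a given $k$ provided $\eta_2\ln k\leqs 0.782$, i.e. $\eta_2\leqs 0.782/\ln k$, where $\eta_2$ denotes any constant admissible in Theorem~\ref{th:eta_k} for $x\geqs p_k$. If some step is too tight, one can sharpen the factor $\ln k$ towards $\ln p_k$ by feeding the upper bound of Lemma~\ref{lem:upper:pk} into the numerator and a lower bound of the shape $p_k\geqs k(\ln k+\ln_2 k-1)$ into $\ln p_k$, which relaxes the condition to roughly $\eta_2(\ln k-\ln_2 k)\leqs 0.782$.

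It then remains to verify $\eta_2\leqs 0.782/\ln k$ for every $k\geqs 688\,383$. For large $k$ this is automatic: via Theorem~1.1 of~\cite{dusart:eps} the admissible constant $\eta_2(x_0)$ satisfies $\eta_2(x_0)\ln x_0\to 0$ as $x_0\to\infty$ (it decays like a negative exponential in $\sqrt{\ln x_0}$); choosing $x_0=e^{b_i}$ with $e^{b_i}\leqs p_k<e^{b_{i+1}}$ one needs $\eta_2(b_i)\,b_{i+1}\leqs 0.782$, which holds for all $b_i$ beyond an explicit point and, for the remaining bounded range, is checked against the finitely many rows of the fine $\eta_2$-table built step by step in the proof of Theorem~\ref{th:eta_k} (for instance the entry $\eta_2=0.01$, $x_2=7\,713\,133\,853$ already settles every $k$ with $p_k\geqs 7\,713\,133\,853$ and $\ln k\leqs 78.2$). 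The genuinely delicate part --- and the reason the threshold rises from the $178\,974$ of Lemma~\ref{lem:upper:pk} to $688\,383$ --- is the moderate range, roughly $688\,383\leqs k\lesssim\pi(10^{9})$, where $p_k$ is still too small for any tabulated $\eta_2$ to meet $\eta_2\leqs 0.782/\ln k$ (the coarse row $\eta_2=0.2$, $x_2=3\,594\,641$ would only cover $\ln k\leqs 3.9$). Here I would resort to a direct computation, verifying $p_k\leqs k(\ln k+\ln_2 k-1+(\ln_2 k-2)/\ln k)$ by running over the primes up to some $k=k_1$ chosen just past the $k$ at which the first sufficiently small fine table entry (around $p_k\sim 10^{8}$--$10^{9}$) takes over, and then gluing this to the analytic argument above. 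The main obstacle is thus essentially bookkeeping: making the fine $\eta_2$-table dense enough, and the explicit check long enough, that the two ranges of $k$ overlap.
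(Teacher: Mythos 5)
Your decomposition --- Proposition~\ref{prop:upper:thetaPk} for $\theta(p_k)$ plus an explicit bound on $p_k-\theta(p_k)$, glued to a computer verification on the moderate range --- is exactly the paper's skeleton, and your diagnosis that the margin $0.782\,k/\ln^2 k$ forces the condition $\eta_2\leqs 0.782/\ln k$ is sound. Where you diverge is in the choice of error term. The paper simply invokes the third-order estimate of Theorem~\ref{th:eta_k}, $|\theta(x)-x|<\eta_3\,x/\ln^3 x$ with the \emph{single} constant $\eta_3=0.78<0.782$ (valid for $x\geqs 158\,822\,621$; the table even gives $0.77$ for $x\geqs e^{27}$), which via (\ref{bound1}) yields $p_k-\theta(p_k)\leqs 0.78\,k/\ln^2 k$ uniformly in $k$ and closes the argument in one line. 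Your condition $\eta_2\leqs 0.782/\ln k$ is precisely this $\eta_3$ bound in disguise, but enforcing it blockwise costs you an unbounded cascade of intervals: the tables stop at $b=5000$, and no single admissible $\eta_2$ on $[e^{5000},\infty)$ can satisfy $\eta_2\ln k\leqs 0.782$ for all $k$, so your tail argument must re-invoke the asymptotic formula of \cite{dusart:eps} on infinitely many blocks --- feasible, but exactly the bookkeeping that Theorem~\ref{th:eta_k} has already packaged as $\eta_3\approx 0.1496$ for $x\geqs e^{5000}$. The one-constant route buys a one-sentence proof (plus the same computational check you describe for $p_k$ below the $\eta_3$ threshold); your version is correct in substance but should be collapsed to it.
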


\begin{proof}
Use Proposition~\ref{prop:upper:thetaPk} with  $\eta_3=0.78$ of Theorem~\ref{th:eta_k} for $\ln p_k>27$. A computer verification concludes the proof.
\end{proof}

\begin{prop}
For $k\geqs 3$,
$$p_k\geqs k\left(\ln k+\ln_2 k-1+\frac{\ln_2 k-2.1}{\ln k}\right).$$
\end{prop}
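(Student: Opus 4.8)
The plan is to mirror the strategy used for the upper bound in Proposition~\ref{min_theta_pk} and Lemma~\ref{lem:upper:thetaPk}, but now in the reverse direction: we want to pass from a lower bound on $\theta(p_k)$ to a lower bound on $p_k$ itself, and then bootstrap. Write $g(k)=k\left(\ln k+\ln_2 k-1+\frac{\ln_2 k-2.1}{\ln k}\right)$ for the target function. Substituting $x=p_k$ in $|\theta(x)-x|\leqs\eta_2\frac{x}{\ln^2 x}$ of Theorem~\ref{th:eta_k} gives $p_k\geqs\theta(p_k)-\eta_2\frac{p_k}{\ln^2 p_k}$, and by~(\ref{bound1}) we have $\frac{p_k}{\ln^2 p_k}\leqs\frac{k}{\ln k}$, so $p_k\geqs\theta(p_k)-\eta_2\frac{k}{\ln k}$. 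Feeding in the lower bound $\theta(p_k)\geqs k\left(\ln k+\ln_2 k-1+\frac{\ln_2 k-\beta}{\ln k}\right)$ from Proposition~\ref{min_theta_pk} (with $\beta=2.050735$ valid for $p_k\geqs 10^{11}$) yields
$$p_k\geqs k\left(\ln k+\ln_2 k-1+\frac{\ln_2 k-\beta-\eta_2}{\ln k}\right),$$
so choosing $\eta_2=0.05$ for $x\geqs10^{11}$ already gives the constant $\beta+\eta_2=2.100735$, which is slightly worse than $2.1$. Hence the first step only \emph{almost} works, and one must iterate: reinsert the sharper lower bound on $p_k$ into the argument of Proposition~\ref{min_theta_pk} (this is exactly the "itering the process" remark there, where $\beta$ was actually driven down to $2.050735$ from an initial $2.094$), and simultaneously use the smaller admissible values of $\eta_2$ available for larger $x$ (from Tables~\ref{table_eta1}~\&~\ref{table_eta2}, e.g. $\eta_2\leqs0.01$ for $x\geqs7\,713\,133\,853$, and still smaller values deeper in). Splitting the range of $k$ into dyadic-type blocks and using the block-appropriate $\eta_2$, one gets $\beta+\eta_2<2.1$ on each block large enough, establishing $p_k\geqs g(k)$ for all $k$ beyond some explicit threshold $k_1$.

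For the truly large range, say $\ln p_k$ enormous, one invokes the effective bound from Theorem~1.1 of~\cite{dusart:eps} as packaged in Theorem~\ref{th:eta_k}, which makes $\eta_2$ tend to $0$ faster than any power of $\ln x$; there the crude estimate $\beta\leqs2.094$ plus a negligible $\eta_2$ comfortably beats $2.1$, so the inequality holds with room to spare for all $k\geqs e^{C}$ for a modest constant $C$. This closes the asymptotic part of the proof without difficulty.

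Finally, for $3\leqs k\leqs k_1$ below the analytic threshold, the inequality $p_k\geqs g(k)$ is checked by direct computation. One should be slightly careful near small $k$: the function $g$ involves $\ln_2 k=\ln\ln k$, so one needs $\ln k>1$, i.e. $k\geqs 3$ is exactly the natural starting point, and for the very first few primes one simply tabulates $p_3=5,p_4=7,\dots$ against $g(3),g(4),\dots$ and verifies the inequality numerically. Because $g$ is increasing and the prime gaps are well controlled in this range, the verification is a finite check; if $k_1$ turns out to be large (in the tens or hundreds of millions, as with the other propositions in this section), it is still a routine though lengthy machine computation, exactly of the type the author performs elsewhere (cf. the "computer verification concludes the proof" in Proposition~\ref{prop:upper:pk}).

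The main obstacle is the bootstrapping in the first paragraph: a single naive application of Theorem~\ref{th:eta_k} loses by a hair ($2.100735$ versus the desired $2.1$), so one genuinely needs the iterative refinement of $\beta$ together with the sharper $\eta_2$-values on larger ranges — essentially re-running the machinery of Proposition~\ref{min_theta_pk} one more round — rather than a clean one-shot estimate. Everything else (the asymptotic tail via~\cite{dusart:eps}, and the finite numerical check for small $k$) is mechanical.
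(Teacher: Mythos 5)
Your proposal follows the paper's proof exactly: substitute $x=p_k$ into Theorem~\ref{th:eta_k}, use (\ref{bound1}) to get $p_k\geqs\theta(p_k)-\eta_2\frac{k}{\ln k}$ (this is (\ref{diff_theta_pk_pk})), feed in the lower bound for $\theta(p_k)$ from Proposition~\ref{min_theta_pk}, and handle small $k$ by direct computation. The only divergence is that the bootstrapping you describe is unnecessary: since Proposition~\ref{min_theta_pk} already restricts to $p_k\geqs 10^{11}$, a sharper admissible $\eta_2$ is available on the whole relevant range (the paper takes $\eta_2=0.04913$, and you yourself quote $\eta_2=0.01$ for $x\geqs 7\,713\,133\,853$), so $2.050735+\eta_2<2.1$ holds in one shot without any iteration.
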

\begin{proof}
Using (\ref{diff_theta_pk_pk}),
we have
$$p_k\geqs \theta(p_k)-\eta_2\frac{k}{\ln k}.$$
By Proposition~\ref{min_theta_pk} and $\eta_2=0.04913$, we conclude the proof.
\end{proof}


\subsubsection{Smallest Interval containing primes}
We already know the result of
{\sc Schoenfeld} \cite{Schoenfeld:MathOfComp:Sharper} showing that, 
for $x\geqs2010759.9$,
the interval $]x,x+x/16597[$ contains at least one prime.
We improve this result with the following proposition.
You can see also \cite{Ramare:JNT:short}.

\begin{prop}
For all $x\geqs 396\,738$, there exists a prime  $p$ such that
$$x<p\leqs x\left(1+\frac{1}{25\ln^2 x}\right).$$
\end{prop}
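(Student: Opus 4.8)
The plan is to show that $\theta(x(1+\delta)) - \theta(x) > 0$ for $x \geqs 396\,738$ with $\delta = 1/(25\ln^2 x)$, since a positive difference forces the existence of a prime in $(x, x(1+\delta)]$. Writing $\theta(x(1+\delta)) - \theta(x) = \bigl(\theta(x(1+\delta)) - x(1+\delta)\bigr) - \bigl(\theta(x) - x\bigr) + x\delta$, I would bound the two bracketed error terms using Theorem~\ref{th:eta_k}. The natural choice is $\eta_2 = 0.2$, valid for $x \geqs 3\,594\,641$, giving
\[
\theta(x(1+\delta)) - \theta(x) \;>\; x\delta - 0.2\,\frac{x(1+\delta)}{\ln^2(x(1+\delta))} - 0.2\,\frac{x}{\ln^2 x}.
\]
Substituting $\delta = 1/(25\ln^2 x)$ and using $\ln(x(1+\delta)) > \ln x$, the right-hand side is at least
\[
\frac{x}{\ln^2 x}\left(\frac{1}{25} - 0.2(1+\delta) - 0.2\right) = \frac{x}{\ln^2 x}\left(0.04 - 0.4 - 0.2\delta\right),
\]
which is negative, so the crude bound $\eta_2 = 0.2$ is \emph{not} enough. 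I would instead use a sharper $\eta_2$ on a large initial range and the sharpest available $\eta_2$ for large $x$: from Theorem~\ref{th:eta_k}, $\eta_2 = 0.01$ for $x \geqs 7\,713\,133\,853$, and even smaller values $\eta_2 \approx 4.42\,\mathrm{E}{-3}$ for $x \geqs e^{32}$, etc., as the proof of that theorem provides tables up to $b = 5000$. With $\eta_2$ small enough that $2\eta_2(1+\delta) < 1/25$, i.e. roughly $\eta_2 < 0.02$, the difference is positive.

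Concretely, I would split into three regimes. First, for $x \geqs 7\,713\,133\,853$ (or wherever $\eta_2 = 0.01$ applies), the estimate above gives $\theta(x(1+\delta)) - \theta(x) > \frac{x}{\ln^2 x}(0.04 - 0.01(1+\delta) - 0.01) > 0$ immediately, and this extends to all larger $x$ since the later table entries only shrink $\eta_2$. Second, for an intermediate range such as $3\,594\,641 \leqs x \leqs 7\,713\,133\,853$, the value $\eta_2 = 0.2$ is too weak for the inequality above, so here I would not use the $\theta$-asymptotic bound directly; instead I would invoke the exact computations of $\theta$ (Tables~\ref{table_theta1}, \ref{table_theta2}) together with the known verified gaps between consecutive primes up to roughly $10^{10}$--$10^{11}$ — the maximal prime gap below $7.7 \times 10^9$ is far smaller than $x/(25\ln^2 x) \approx 7.7\times 10^9/(25 \cdot 23^2) \approx 5.8 \times 10^5$ — so the interval trivially contains a prime. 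Third, for $396\,738 \leqs x \leqs 3\,594\,641$, the same maximal-prime-gap data settles it directly by computation, since $x/(25\ln^2 x)$ is comfortably larger than the largest gap in that range.

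The main obstacle is the middle regime: making the analytic bound work requires $\eta_2$ noticeably below $0.02$, which Theorem~\ref{th:eta_k} only supplies for $x$ beyond about $7.7 \times 10^9$, while direct gap-verification data is typically cited only up to around $10^{10}$. So the crux is to confirm that these two ranges overlap — i.e. that the known verification of prime gaps (or of the Riemann Hypothesis to height $10^{13}$, from which such gap bounds follow via the explicit formula, or simply the tabulated record gaps) reaches at least up to the threshold where $\eta_2 = 0.01$ becomes available. Assuming that overlap (which it does, since record-gap tables extend well past $10^{13}$ and certainly past $7.7 \times 10^9$), the proof closes: below the threshold by finite verification using maximal gaps, above it by the displayed inequality with $\eta_2 \leqs 0.01$. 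A short computer check handles the finitely many verifications and pins down the precise constant $396\,738$ as the smallest starting point for which $x/(25\ln^2 x)$ exceeds every prime gap encountered.
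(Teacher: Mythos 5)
Your proposal is correct and follows essentially the same route as the paper: prove $\theta(x(1+\delta))-\theta(x)>0$ via Theorem~\ref{th:eta_k} with an $\eta_2$ small enough that $\eta_2(2+\delta)<1/25$, then cover the remaining finite range with prime-gap data and a computer check that fixes the constant $396\,738$. The only difference is bookkeeping: the paper takes $\eta_2=0.0195$ valid for $x\geqs e^{28}$ and bridges the gap down to $3.8\cdot 10^6$ with Schoenfeld's bound $p_{n+1}-p_n\leqs 652$ for $p_n\leqs 2.686\cdot 10^{12}$, whereas you take $\eta_2=0.01$ for $x\geqs 7\,713\,133\,853$ and rely on maximal-gap tables below that threshold, both of which work.
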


This result is better that {\sc Rosser} \& {\sc Schoenfeld}'s one
for $x\geqs e^{25.77}$. The method used in \cite{Ramare:JNT:short} gives better results (if we compare with the same order of $k$, i.e. $k=0$).

\begin{proof}
Let $0<f(x)<1$ for $x\geqs x_0$.
\begin{eqnarray*}
\theta\left(\frac{1}{1-f(x)}x\right)-\theta(x)&\geqs&\frac{1}{1-f(x)}x
-\eta_k\frac{\frac{x}{1-f(x)}}{\ln^k\left(\frac{x}{1-f(x)}\right)}
-\left(x+\eta_k\frac{x}{\ln^k x}\right)\\
&>&\left(\frac{1}{1-f(x)}-1\right)x-2\eta_k\left(\frac{1}{1-f(x)}\right)\frac{x}{\ln^k x}
\end{eqnarray*}
Choose $f(x)=\frac{2\eta_k}{\ln^k x}$ hence
$$\theta\left(\frac{1}{1-\frac{2\eta_k}{\ln^k x}}x\right)-\theta(x)>0.$$
For $k=2$, we have $n_2=0.0195$  and
$\frac{1}{1-2\cdot0.0195/\ln^2 x}\leqs 1+1/(25\ln^2 x)$ for $\ln x\geqs 28$.
According to \cite{Schoenfeld:MathOfComp:Sharper} p.~355,
$$p_{n+1}-p_n\leqs 652\mbox{ for }p_n\leqs2.686\cdot10^{12},$$
hence the result is also valid from $x\geqs 3.8\cdot 10^6$.
\end{proof}

\subsection{Estimates of function $\pi$}
Remember that
$$\pi(x)=\frac{x}{\ln x}\left(1+\frac{1}{\ln x}+\frac{2}{\ln^2 x}
+O\left(\frac{1}{\ln^3 x}\right)\right).$$

\begin{theorem}
\label{Ross:th:pi}
\begin{eqnarray}
\label{pi_ordre1}
\frac{x}{\ln x}\left(1+\frac{1}{\ln x}\right)\leqs&\pi(x)&\leqs\frac{x}{\ln x}\left(1+\frac{1.2762}{\ln x}\right)\\
\nonumber \scriptstyle{ x\geqs 599}&&{\scriptstyle x>1}\\
\nonumber\lefteqn{\mbox{(the value 1.2762 is chosen for }x=p_{258}=1627). }\\ %
\nonumber&&\\
\label{pi_ordre1_5}
\frac{x}{\ln x-1}\leqs&\pi(x)&\leqs\frac{x}{\ln x-1.1}\\
\nonumber \scriptstyle x\geqs5\,393&&\scriptstyle x\geqs60\,184\\
\nonumber&&\\
\label{pi_ordre2}\frac{x}{\ln x}\left(1+\frac{1}{\ln x}+\frac{2}{\ln^2 x}\right)\leqs&
\pi(x)&\leqs\frac{x}{\ln x}\left(1+\frac{1}{\ln x}+\frac{2.334}{\ln^2 x}\right)\\
\nonumber \scriptstyle x\geqs 88\,783&&\scriptstyle x\geqs2\,953\,652\,287
\end{eqnarray}

\end{theorem}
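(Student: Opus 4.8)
The plan is to derive each inequality in Theorem~\ref{Ross:th:pi} from the estimates on $\theta(x)$ already established in Theorem~\ref{th:eta_k} via the classical Abel-summation (partial summation) identity
$$\pi(x)=\frac{\theta(x)}{\ln x}+\int_2^x\frac{\theta(t)}{t\ln^2 t}\,dt,$$
valid for $x\geqs 2$. Substituting the two-sided bound $|\theta(t)-t|<\eta_k\,t/\ln^k t$ for $t\geqs x_k$ turns the right-hand side into $x/\ln x$ plus $\int_2^x dt/\ln^2 t$ plus controlled error terms, and the main point is that $\int_2^x dt/\ln^2 t$ itself admits the asymptotic expansion $\frac{x}{\ln^2 x}+\frac{2x}{\ln^3 x}+O(x/\ln^4 x)$, obtained by repeated integration by parts. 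Collecting terms to the appropriate order then produces the shape $\frac{x}{\ln x}\bigl(1+\tfrac1{\ln x}+\tfrac{c}{\ln^2 x}\bigr)$ with an explicit constant $c$ depending on the chosen $\eta_k$.

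Concretely, for the upper bound in \eqref{pi_ordre2} I would start from $\theta(t)<t+\eta_2 t/\ln^2 t$ with $\eta_2$ small (e.g.\ $\eta_2=0.01$ for $t\geqs 7\,713\,133\,853$ from the table), bound the tail of the integral above by $\int_{x_2}^x (1+\eta_2/\ln^2 t)\,dt/\ln^2 t$, and absorb the finite initial segment $\int_2^{x_2}\theta(t)/(t\ln^2 t)\,dt$ plus the crude term $\theta(x_2)/\ln x$ into a constant; integrating by parts twice on $\int dt/\ln^2 t$ yields the $2/\ln^2 x$ coefficient, and the leftover $O(1/\ln^4 x)$ pieces together with the $\eta_2$-contribution and the initial constant are shown to be $<0.334\,x/\ln^3 x$ for $x$ past the threshold $2\,953\,652\,287$. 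For the lower bound of \eqref{pi_ordre2} one uses $\theta(t)>t-\eta_2 t/\ln^2 t$ symmetrically, now needing the error to be $<0$ in the $1/\ln^3 x$-coefficient, which forces a larger starting point; the weaker statements \eqref{pi_ordre1} are the same computation truncated one order earlier (using $\eta_1=1.2323$, or a numerically verified constant for small $x$), and \eqref{pi_ordre1_5} follows because $\frac{x}{\ln x}(1+\tfrac1{\ln x}+\cdots)$ and $\frac{x}{\ln x - c}$ agree to second order in $1/\ln x$, so one compares the two and checks the remaining one-sided inequality directly for $\ln x$ large, finishing the bounded range by machine computation of $\pi(x)$ against the explicit right-hand sides.

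For each inequality the proof thus splits into an \emph{asymptotic range}, handled analytically as above, and a \emph{finite range} below the stated $x_k$ (or below the range where the chosen $\eta_k$ is valid), which is closed by direct verification using known exact values of $\pi(x)$ and of the primes $p_k$ near the boundary; the remark that the constant $1.2762$ is attained at $x=p_{258}=1627$ signals exactly that the binding case in \eqref{pi_ordre1} is small and checked by hand. The main obstacle I anticipate is bookkeeping the error terms precisely enough: one must keep explicit track of the constant coming from $\int_2^{x_k}\theta(t)/(t\ln^2 t)\,dt$ and of the next-order remainder in the integration by parts, and then verify that the resulting threshold on $x$ is no larger than the $x_k$ where the table value of $\eta_k$ applies — if it is larger, one iterates with a smaller $\eta_k$ from a later row of the table in Theorem~\ref{th:eta_k}. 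A secondary nuisance is that the lower bounds are genuinely tighter (the sign of the $1/\ln^3 x$ term must be controlled, not merely its size), so the thresholds $88\,783$ and $5\,393$ require choosing $\eta_k$ aggressively and then a somewhat delicate numerical check on the transition interval.
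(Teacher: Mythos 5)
Your overall strategy is the paper's: partial summation expressing $\pi(x)$ through $\theta(x)/\ln x$ plus $\int \theta(t)\,dt/(t\ln^2 t)$, insertion of the bounds $|\theta(t)-t|\leqs \eta_k t/\ln^k t$ from Theorem~\ref{th:eta_k}, comparison with the target functions beyond a cut point (the paper takes $x_0=10^{11}$ and carries the constant $K=\pi(x_0)-\theta(x_0)/\ln x_0$, which is exactly your absorbed initial segment), and a machine check below the threshold; the paper likewise deduces (\ref{pi_ordre1}) and (\ref{pi_ordre1_5}) from (\ref{pi_ordre2}) for large $x$. The only presentational difference on the upper bound is that the paper never expands $\int dt/\ln^2 t$ by repeated integration by parts: it compares the derivatives $J'$ and $M'$ of the two bounding functions and checks the inequality once at $x_0$, which disposes of the $O(x/\ln^4 x)$ remainder you are worried about bookkeeping.

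The one step of yours that does not go through as written is the lower bound of (\ref{pi_ordre2}). Inserting $\theta(t)>t-\eta_2 t/\ln^2 t$ ``symmetrically'' puts the error at exactly the order $x/\ln^3 x$ whose coefficient you must keep equal to $2$: you obtain $\pi(x)\geqs \frac{x}{\ln x}+\frac{x}{\ln^2 x}+(2-\eta_2)\frac{x}{\ln^3 x}+\cdots$, and $2-\eta_2<2$. This can only be rescued by the positive $6x/\ln^4 x$ term of the logarithmic integral, which dominates $\eta_2 x/\ln^3 x$ only while $\ln x<6/\eta_2$, so with any fixed $\eta_2$ the argument covers only a bounded range and forces an endless descent through the table of $\eta_2$-values. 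The paper sidesteps this by using the $k=3$ estimate $|\theta(x)-x|\leqs \eta_3 x/\ln^3 x$ for the lower bound: the error then enters only at order $x/\ln^4 x$, where the single condition $\eta_3(1-3/\ln x)<6$ (satisfied uniformly once $\eta_3\leqs 1$, i.e.\ for $x\geqs 89\,967\,803$ and all larger ranges) gives $J'>m'$; one then checks $m(x_0;2)<J(x_0;-\eta_3)$ and closes the gap down to $88\,783$ by computation. You should make the same switch of $k$ between the two sides; your upper-bound computation with $k=2$ is fine, since there the error simply inflates the constant from $2$ to $2.334$.
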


\begin{proof}
We consider the last inequality.
Let
$$x_{0}=10^{11},\quad K=\pi(x_{0})-\frac{\theta(x_{0})}{\ln x_{0}}.$$
Write
$$J(x;\eta_k)=K+\frac{x}{\ln x}+\eta_k\frac{x}{\ln ^{k+1} x}
+\int_{x_{0}}^x \left(\frac{1}{\ln^2 y}+\frac{\eta_k}{\ln^{k+2} y}\right)dy$$
Since
$$\pi(x)=\pi(x_{0})-\frac{\theta(x_{0})}{\ln x_{0}}
+\frac{\theta(x)}{\ln x}+\int_{x_{0}}^{x}\frac{\theta(y)dy}{y\ln^2 y}$$
and $|\theta(x)-x|\leqs \eta_k\frac{x}{\ln^k x}$ for $x\geqs x_{0}$,
we have, for $x\geqs x_{0}$,
$$J(x;-\eta_k)\leqs \pi(x)\leqs J(x;\eta_k).$$
Write $M(x;c)=\frac{x}{\ln x}\left(1+\frac{1}{\ln x}+\frac{c}{\ln^2 x}
\right)$ for upper bound's function for $\pi(x)$.
Let's write the derivatives of $J(x;a)$ and of $M(x;c)$ with 
respect to $x$:
$$J'(x;a)=\frac{1}{\ln x}+\frac{\eta_k}{\ln^{k+1} x}-k\frac{\eta_k}{\ln^{k+2} x},$$
$$M'(x;c)=\frac{1}{\ln x}+\frac{c-2}{\ln^3 x}-\frac{3c}{\ln^4 x}.$$
For $k=2$, we must choose $c\geqs (2+\eta_2-2\eta_2/\ln x_0)/(1-3/\ln x_0)$ to have $J'<M'$ for $x\geqs x_0$.
With $\eta_2=0.05$, we choose $c=2.321$. We verify by computer that
$J(10^{11};0.05)<M(10^{11};2.334)$. 

By direct computation for small values of $x$ to obtain
$$\pi(x)<\frac{x}{\ln x}\left(1+\frac{1}{\ln x}+\frac{2.334}{\ln^2 x}
\right)\quad\mbox{ for }x\geqs 2\,953\,652\,287.$$
Now write
$$m(x;d)=\frac{x}{\ln x}\left(1+\frac{1}{\ln x}+\frac{d}{\ln^2 x}
\right).$$
We study the derivatives: we choose $k=3$, $d=2$ and $\eta_3(1-3/\ln x)<6$ to have $J'>m'$. 
 As $m(x_{0};2)<J(x_{0};-6)$ and by direct 
computation for small values, we obtain
$$ \pi(x)>\frac{x}{\ln x}\left(1+\frac{1}{\ln x}+\frac{2}{\ln^2 x}
\right)\quad\mbox{ for }x\geqs 88\,783.$$

The others inequalities follows: 
$(\ref{pi_ordre2})\Rightarrow(\ref{pi_ordre1_5})\Rightarrow(\ref{pi_ordre1})$ for large $x$.
\end{proof}

\subsection{Estimates of sums over primes}
Let $\gamma$ be Euler's constant ($\gamma\approx0.5772157$).

\begin{theorem}
\label{Rosser:th:InvP}
Let $B= \gamma+
\sum_{p}\left(\ln(1-1/p)+1/p\right)\approx0.26149~72128~47643$.
For $x>1$,
$$-\left(\frac{1}{10\ln^2 x}+\frac{4}{15\ln^3x}\right)\leqs\sum_{p\leqs x}\frac{1}{p}-\ln_{2}x -B
.$$
For $x\geqs10372$,
$$\sum_{p\leqs x}\frac{1}{p}-\ln_{2}x -B\leqs\frac{1}{10\ln^2 
x}+\frac{4}{15\ln^3x}.$$
\end{theorem}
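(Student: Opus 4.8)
The plan is to use partial summation to convert $\sum_{p\leqs x}1/p$ into an expression involving $\theta(y)$, and then feed in the explicit bounds $|\theta(y)-y|<\eta_k\,y/\ln^k y$ from Theorem~\ref{th:eta_k}. Concretely, writing $\theta(x)=\sum_{p\leqs x}\ln p$ and applying Abel summation to the weights $1/(p\ln p)$, one gets
$$\sum_{p\leqs x}\frac1p=\frac{\theta(x)}{x\ln x}+\int_{2}^{x}\theta(y)\left(\frac{1}{y^2\ln y}+\frac{1}{y^2\ln^2 y}\right)dy.$$
The next step is to subtract off the ``main term'': replacing $\theta(y)$ by $y$ inside the integral and $\theta(x)$ by $x$ in the leading fraction produces exactly $\ln_2 x$ plus a constant, and the classical identity $B=\gamma+\sum_p(\ln(1-1/p)+1/p)$ identifies that constant once one lets $x\to\infty$ in the remainder. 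So the error term is
$$\sum_{p\leqs x}\frac1p-\ln_2 x-B=\frac{\theta(x)-x}{x\ln x}-\int_{x}^{\infty}\frac{\theta(y)-y}{y^2}\left(\frac{1}{\ln y}+\frac{1}{\ln^2 y}\right)dy,$$
where the tail integral converges because $\theta(y)-y=O(y/\ln^k y)$ for every $k$.

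From here the estimate is mechanical: bound $|\theta(x)-x|$ by the relevant $\eta_k x/\ln^k x$ from Theorem~\ref{th:eta_k} (choosing, say, $k=3$ for the leading $1/\ln^2 x$ behaviour and $k=4$ for the $1/\ln^3 x$ refinement, splitting the range of $x$ so as to use the smallest admissible $\eta_k$ on each subinterval), and estimate the tail integral $\int_x^\infty \eta_k y^{-1}\ln^{-k-1}y\,(1+1/\ln y)\,dy$ by elementary calculus — it is $O(\ln^{-k}x)$ with a constant one can make explicit. Matching these contributions against the claimed bound $\tfrac{1}{10\ln^2 x}+\tfrac{4}{15\ln^3 x}$ should hold for $\ln x$ large (the $1/10$ and $4/15$ are chosen with room to spare against the true asymptotic constants). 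For the lower bound one argues symmetrically using $\theta(y)-y\geqs -\eta_k y/\ln^k y$; note the lower bound is asserted for all $x>1$ while the upper bound needs $x\geqs 10372$, reflecting that $\theta(x)<x$ up to $8\cdot10^{11}$ (Proposition~\ref{prop_eta0} and Table~\ref{table_012}) makes the lower inequality easy on the initial range, whereas the upper inequality must be checked against a genuine numerical computation there.

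The main obstacle is the transition region: the asymptotic argument only delivers the inequalities once $\ln x$ exceeds some threshold (comfortably below $5000$, using the tabulated $\eta_k$ values step by step as in Theorem~\ref{th:eta_k} and the final $x_0=e^{5000}$ constants), so one must close the gap between that threshold and $x=10372$ (resp.\ $x>1$) by direct computation of $\sum_{p\leqs x}1/p-\ln_2 x-B$. This is the same pattern as the other proofs in the paper: an analytic inequality valid for large $x$, completed by a finite machine verification, with the boundary constant $1627=p_{258}$-style tight case pinning down where the numerical check must start. Care is also needed in making the tail-integral constant explicit enough that $1/10$ and $4/15$ are actually achieved and not merely the correct order.
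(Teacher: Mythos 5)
Your proposal matches the paper's proof: the paper cites (4.20) of Rosser--Schoenfeld for exactly the identity you derive by partial summation, bounds $|\theta(y)-y|$ by $\eta_k y/\ln^k y$ via Theorem~\ref{th:eta_k}, evaluates $\int_x^\infty \frac{1+\ln y}{y\ln^{k+2}y}\,dy=\frac{1}{k\ln^k x}+\frac{1}{(k+1)\ln^{k+1}x}$, and closes the remaining range by computer check. The one discrepancy is your choice of $k=3,4$: the paper takes $k=2$ with $\eta_2=0.2$ (valid for $x\geqslant 3\,594\,641$), so that the constants are exactly $\eta_2/2=1/10$ and $4\eta_2/3=4/15$ rather than ``chosen with room to spare''; your choice would still succeed but pushes the analytic threshold higher and enlarges the numerical verification.
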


\begin{proof}
By (4.20) of \cite{Rosser&Schoenfeld:Ill:Approx},
$$\sum_{p\leqs x}\frac{1}{p}=\ln_{2}x +B+\frac{\theta(x)-x}{x\ln x}
-\int_{x}^\infty \frac{(\theta(y)-y)(1+\ln y)}{y^2\ln^2 y}dy.$$
Hence
$$|\sum_{p\leqs x}\frac{1}{p}-\ln_{2}x -B|\leqs
\frac{|\theta(x)-x|}{x\ln x}
+\int_{x}^\infty \frac{|\theta(y)-y|(1+\ln y)}{y^2\ln^2 y}dy.$$
As $|\theta(x)-x|\leqs \eta_k x/\ln^k x$ (Theorem~\ref{th:eta_k}) and
$$\int_{x}^\infty\frac{1+\ln y}{y\ln^{k+2} y}dy=\frac{1}{k\ln^k x}
+\frac{1}{(k+1)\ln^{k+1} x},$$
we have the result
\begin{equation}
\label{eq:sumInvP}
\left|\sum_{p\leqs x}\frac{1}{p}-\ln_{2}x -B\right|
\leqs \frac{\eta_k/k}{\ln^k x}+\frac{\eta_k(1+\frac{1}{k+1})}{\ln^{k+1} x}.
\end{equation}
For $k=2$ and $\eta_2=0.2$, the result is valid for $x\geqs 3594641$. We conclude by computer's check.
\end{proof}

\begin{theorem}
\label{Rosser:th:lnPInvP}
Let $E= -\gamma-\sum_{n=2}^{\infty}\sum_{p}(\ln p)/p^n
\approx-1.33258~22757~33221$.
For $x>1$,
$$-\left(\frac{0.2}{\ln x}+\frac{0.2}{\ln^2x}\right)
\leqs\sum_{p\leqs x}\frac{\ln p}{p}-\ln x -E.$$
For $x\geqs 2974$,
$$\sum_{p\leqs x}\frac{\ln p}{p}-\ln x -E\leqs\frac{0.2}{\ln x}
+\frac{0.2}{\ln^2x}.$$
\end{theorem}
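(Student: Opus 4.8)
The plan is to mimic exactly the structure of the proof of Theorem~\ref{Rosser:th:InvP}, since the arithmetic identity for $\sum_{p\leqs x}(\ln p)/p$ has the same flavour as the one for $\sum_{p\leqs x}1/p$. First I would invoke the classical partial-summation identity (it is essentially (4.21) of \cite{Rosser&Schoenfeld:Ill:Approx}, the companion of the formula used just above): for $x>1$,
$$\sum_{p\leqs x}\frac{\ln p}{p}=\ln x +E+\frac{\theta(x)-x}{x}-\int_{x}^\infty\frac{\theta(y)-y}{y^2}\,dy,$$
where $E$ is the constant defined in the statement. Taking absolute values gives
$$\left|\sum_{p\leqs x}\frac{\ln p}{p}-\ln x-E\right|\leqs\frac{|\theta(x)-x|}{x}+\int_{x}^\infty\frac{|\theta(y)-y|}{y^2}\,dy.$$

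Next I would feed in the explicit bound $|\theta(y)-y|\leqs\eta_k\,y/\ln^k y$ from Theorem~\ref{th:eta_k}. The first term becomes $\eta_k/\ln^k x$, and for the integral I would use the elementary antiderivative
$$\int_{x}^\infty\frac{dy}{y\ln^{k} y}=\frac{1}{(k-1)\ln^{k-1}x}\qquad(k\geqs 2),$$
so that the tail contributes $\eta_k/((k-1)\ln^{k-1}x)$. Thus, for any admissible $k\geqs 2$,
$$\left|\sum_{p\leqs x}\frac{\ln p}{p}-\ln x-E\right|\leqs\frac{\eta_k}{\ln^{k-1}x}\cdot\frac{1}{k-1}+\frac{\eta_k}{\ln^k x}.$$
Specializing to $k=2$ and using the value $\eta_2=0.2$ valid for $x\geqs 3\,594\,641$ yields precisely the claimed bound $\tfrac{0.2}{\ln x}+\tfrac{0.2}{\ln^2 x}$ for all such $x$. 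For the two-sided statement over $x>1$ one uses the same inequality but keeps the sign: since the identity writes the difference as an exact error term, the lower bound $-\bigl(\tfrac{0.2}{\ln x}+\tfrac{0.2}{\ln^2 x}\bigr)$ holds wherever the $\eta_2$-bound does, and the remaining small range $1<x<3\,594\,641$ for the lower bound (and $1<x<2974$ versus $3\,594\,641$ for the upper bound) is dispatched by direct numerical verification, exactly as in the proof of Theorem~\ref{Rosser:th:InvP}.

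The main obstacle is not the analysis — the partial-summation identity and the antiderivative are routine — but the bookkeeping at the boundary: the clean $\eta_2=0.2$ estimate only kicks in at $x\geqs 3\,594\,641$, whereas the stated threshold for the upper bound is the much smaller $x\geqs 2974$. Closing that gap requires either invoking a sharper $\eta_k$ for a different $k$ on an intermediate range (Theorem~\ref{th:eta_k} supplies several: e.g. $\eta_1=1.2323$, or $\eta_3=10,\,20.83$, each on its own range) and checking that the resulting mixed bound still stays under $\tfrac{0.2}{\ln x}+\tfrac{0.2}{\ln^2 x}$, or else a finite but genuine computer check of $\sum_{p\leqs x}(\ln p)/p-\ln x-E$ against the target on $2974\leqs x\leqs 3\,594\,641$. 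I would do the latter: evaluate the jump function at each prime in that window (it is piecewise constant, jumping by $(\ln p)/p$ at each prime $p$) and confirm the inequality at the left endpoint of each interval, which is where the right-hand side is smallest and the left-hand side largest. The analogous check for the lower bound over $1<x<3\,594\,641$ is handled the same way. This is the step that "concludes by computer's check," in the paper's phrasing.
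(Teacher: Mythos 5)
Your proposal follows the paper's proof essentially verbatim: the same identity (4.21) of Rosser--Schoenfeld, the same absolute-value estimate, the same antiderivative $\int_x^\infty dy/(y\ln^k y)=1/((k-1)\ln^{k-1}x)$, the same choice $k=2$, $\eta_2=0.2$ for $x\geqs 3\,594\,641$, and the same concluding computer verification on the remaining range. The only difference is that you spell out how the finite check is organized, which the paper leaves implicit.
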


\begin{proof}
By (4.21) of \cite{Rosser&Schoenfeld:Ill:Approx},
$$\sum_{p\leqs x}\frac{\ln p}{p}=\ln x +E+\frac{\theta(x)-x}{x}
-\int_{x}^\infty \frac{\theta(y)-y}{y^2}dy.$$
Hence
$$|\sum_{p\leqs x}\frac{\ln p}{p}-\ln x -E|\leqs\frac{|\theta(x)-x|}{x}
+\int_{x}^\infty \frac{|\theta(y)-y|}{y^2}dy.$$
As
$$\int_{x}^\infty\frac{dy}{y\ln^k y}=\frac{1}{(k-1)\ln^{k-1} x},$$
Theorem~\ref{th:eta_k} yields the result for $x\geqs 3594641$ with $k=2$. 
We conclude by computer's check.
\end{proof}

\subsection{Estimates of products over primes} 
\begin{theorem}
\label{Rosser:th:ProdP}
For $x>1$,
$$\prod_{p\leqs x}\left(1-\frac{1}{p}\right)<\frac{e^{-\gamma}}{\ln x}
\left(1+\frac{0.2}{\ln^2 x}\right)$$
and for $x\geqs 2\,973$,
$$\frac{e^{-\gamma}}{\ln x}
\left(1-\frac{0.2}{\ln^2 x}\right)
<\prod_{p\leqs x}\left(1-\frac{1}{p}\right)
$$

For $x>1$,
$${e^{\gamma}}{\ln x}
\left(1-\frac{0.2}{\ln^2 x}\right)<\prod_{p\leqs x}\frac{p}{p-1}.
$$
and for $x\geqs 2\,973$,
$$\prod_{p\leqs x}\frac{p}{p-1}<{e^{\gamma}}{\ln x}
\left(1+\frac{0.2}{\ln^2 x}\right).$$
\end{theorem}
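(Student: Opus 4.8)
The plan is to reduce the product estimates to the already-established asymptotic for $\sum_{p\leqs x}\ln(1-1/p)$, and the latter to Theorem~\ref{Rosser:th:InvP}. Recall Mertens' classical identity in the form
$$\ln\prod_{p\leqs x}\left(1-\frac1p\right)=\sum_{p\leqs x}\ln\left(1-\frac1p\right)=-\ln_2 x-\gamma+\left(\sum_{p\leqs x}\frac1p-\ln_2 x-B\right),$$
which follows from the definition $B=\gamma+\sum_p(\ln(1-1/p)+1/p)$ by separating the finitely many small-prime terms and the tail. Writing $R(x):=\sum_{p\leqs x}1/p-\ln_2 x-B$, Theorem~\ref{Rosser:th:InvP} gives $|R(x)|\leqs \frac{1}{10\ln^2 x}+\frac{4}{15\ln^3 x}$ for $x\geqs 10372$, and the lower bound $-\left(\frac{1}{10\ln^2 x}+\frac{4}{15\ln^3 x}\right)\leqs R(x)$ for all $x>1$. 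Hence $\prod_{p\leqs x}(1-1/p)=\frac{e^{-\gamma}}{\ln x}\,e^{R(x)}$, and the whole theorem becomes a matter of converting the additive bound on $R(x)$ into a multiplicative bound $e^{R(x)}\lessgtr 1\pm 0.2/\ln^2 x$.

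The key steps, in order: first establish the identity above carefully, checking that the rearrangement of the (absolutely convergent) series defining $B$ is legitimate and produces exactly $-\ln_2 x-\gamma+R(x)$. Second, for the upper bound on $\prod(1-1/p)$: use $e^t\leqs 1+t+t^2$ for $t$ in a bounded range together with $R(x)\leqs \frac{1}{10\ln^2 x}+\frac{4}{15\ln^3 x}$ to show $e^{R(x)}\leqs 1+\frac{0.2}{\ln^2 x}$ once $\ln x$ is large enough; since the claim is asserted for all $x>1$, the small-$x$ range (below the threshold coming both from Theorem~\ref{Rosser:th:InvP}'s cutoff $10372$ and from the inequality $e^t\leqs 1+t+t^2$) is dispatched by direct computation. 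Third, for the lower bound, valid for $x\geqs 2973$: use $e^t\geqs 1+t\geqs 1-\frac{1}{10\ln^2 x}-\frac{4}{15\ln^3 x}$ and verify that this is $\geqs 1-\frac{0.2}{\ln^2 x}$ exactly when $\frac{4}{15\ln^3 x}\leqs \frac{0.1}{\ln^2 x}$, i.e. $\ln x\geqs 8/3$, hence for all $x\geqs e^{8/3}\approx 14.4$; the range between $2973$ and the point where the analytic bound kicks in is again a finite computer check. Fourth, the companion statements for $\prod_{p\leqs x}\frac{p}{p-1}$ are immediate: this product is the reciprocal of $\prod_{p\leqs x}(1-1/p)$, so $\prod\frac{p}{p-1}=e^{\gamma}\ln x\cdot e^{-R(x)}$, and one reruns the same $e^{\pm t}$ estimates with the sign of $R$ flipped (using the unconditional lower bound on $R$ for the ``for $x>1$'' half and the two-sided bound for the ``$x\geqs 2973$'' half). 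One should double-check the constant $0.2$ survives: from $|R(x)|\leqs \frac{1}{10\ln^2 x}+\frac{4}{15\ln^3 x}$ the quadratic correction in $e^t$ contributes a term of size $\lesssim (0.1)^2/\ln^4 x$, which is absorbed into the slack between $\frac{1}{10}$ and $\frac{2}{10}$ with room to spare.

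The main obstacle is not analytic but bookkeeping: the two-sided bound of Theorem~\ref{Rosser:th:InvP} only holds for $x\geqs 10372$, while several of the product inequalities are claimed for all $x>1$ or for $x\geqs 2973$, so one must (i) use the unconditional one-sided bounds where available and (ii) certify the residual finite ranges $1<x<10372$ (resp. $2973\leqs x<10372$) by explicit numerical evaluation of the partial products against the target functions. Care is needed near the small end, where $\ln x$ is close to $1$ and the correction terms $\pm 0.2/\ln^2 x$ are large; in particular for the lower-bound statements with threshold $2973$ one must confirm the constant $0.2$ genuinely works down to $x=2973$ and not merely asymptotically. The inequality $e^t\leqs 1+t+t^2$ is valid for $t\leqs 1$ or so, which comfortably covers all $x$ past a tiny threshold, so the transcendental part is routine; the verification that $2973$ (rather than some larger bound) is admissible is the one place a careful computation is unavoidable.
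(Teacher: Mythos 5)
Your overall strategy---exponentiating the Mertens-type estimate behind Theorem~\ref{Rosser:th:InvP}---is exactly the paper's, but your central identity is wrong, and the error propagates. With $R(x)=\sum_{p\leqs x}1/p-\ln_2 x-B$ and $S=\sum_{p>x}\left(\ln(1-1/p)+1/p\right)\leqs 0$, the definition of $B$ gives
$$\sum_{p\leqs x}\ln\left(1-\frac{1}{p}\right)=-\gamma-\ln_2 x-R(x)-S,$$
so $\prod_{p\leqs x}(1-1/p)=\frac{e^{-\gamma}}{\ln x}\,e^{-R(x)-S}$, not $\frac{e^{-\gamma}}{\ln x}\,e^{R(x)}$ as you assert. (Sanity check: if $\sum_{p\leqs x}1/p$ is larger than expected, the product of the factors $1-1/p$ must be smaller, so $R$ has to enter with a minus sign.) This has two concrete consequences. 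First, it inverts the matching of the one-sided bounds of Theorem~\ref{Rosser:th:InvP} to the one-sided claims: the two inequalities asserted for all $x>1$ (the upper bound on $\prod(1-1/p)$ and the lower bound on $\prod p/(p-1)$) need an upper bound on $-R(x)$, i.e.\ precisely the \emph{unconditional lower} bound $R(x)\geqs-\left(\frac{1}{10\ln^2 x}+\frac{4}{15\ln^3 x}\right)$, while the two inequalities for $x\geqs 2973$ need the upper bound on $R$, available only for $x\geqs 10372$. Under your formula your stated plan (e.g.\ ``use the unconditional lower bound on $R$'' for the $x>1$ half of the $\prod p/(p-1)$ statement) supplies a bound in the wrong direction and proves nothing. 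Second, and more substantively, you drop the tail $S$ without justification. Since $-S=\sum_{n\geqs 2}\frac{1}{n}\sum_{p>x}p^{-n}>0$, this term works \emph{against} you in exactly the two unconditional inequalities, where the exponent is $-R(x)-S$ and the positive quantity $-S$ must be bounded above; the paper does this with the Rosser--Schoenfeld estimate $-S<\frac{1.02}{(x-1)\ln x}$. Without some such bound those two halves of the theorem are not established. For the two $x\geqs 2973$ inequalities one may simply use $S\leqs 0$, as the paper does.

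The remainder of your plan---converting the additive bound into $1\pm 0.2/\ln^2 x$ via elementary estimates on $e^{t}$, and certifying the residual finite ranges ($1<x<10372$, resp.\ $2973\leqs x<10372$) by direct computation---is sound and matches the paper's structure, so the proof is repairable once the identity is corrected, the bounds are reassigned accordingly, and the tail $S$ is accounted for.
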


\begin{proof}
By definition of $B$ and (\ref{eq:sumInvP}), we have
$$\left|-\gamma-\ln_{2}x-\sum_{p>x}\frac{1}{p}-\sum_{p}\ln(1-1/p)\right|
\leqs\frac{\eta_k/k}{\ln^k x}+\frac{\eta_k(1+\frac{1}{k+1})}{\ln^{k+1} x}.$$
Let 
$S=\sum_{p>x}\left(\ln(1-1/p)+1/p\right)=
-\sum_{n=2}^\infty\frac{1}{n}\sum_{p>x}\frac{1}{p^n}.$
We have
$$-\gamma-\ln_{2}x-\sum_{p\leqs x}\ln(1-1/p)-S
\geqs-\frac{\eta_k}{k\ln^k x}-\frac{(k+2)\eta_k}{(k+1)\ln^{k+1} x}.$$
Take the exponential of both sides to obtain
$$\prod_{p\leqs x}\left(1-\frac{1}{p}\right)\leqs 
\frac{e^{-\gamma}}{\ln x}\exp\left( -S+\frac{\eta_k}{k\ln^k x}
+\frac{(k+2)\eta_k}{(k+1)\ln^{k+1} x}\right).$$
We use lower bound for S given in \cite{Rosser&Schoenfeld:Ill:Approx} 
p.~87:
$$-S<\frac{1.02}{(x-1)\ln x}.$$
Hence, for $k=2$, $\eta_2=0.2$ and $x\geqs 3\,594\,641$,
$$\prod_{p\leqs x}\left(1-\frac{1}{p}\right)\leqs 
\frac{e^{-\gamma}}{\ln x}\exp(0.11/\ln^2 x).$$
We have also
$$\prod_{p\leqs x}\frac{p-1}{p}\geqs 
{e^{\gamma}}{\ln x}\exp(-0.11/\ln^2 x).$$

In the same way, as 
$$-\gamma-\ln_{2}x-\sum_{p\leqs x}\ln(1-1/p)-S
\leqs\frac{\eta_k}{k\ln^k x}+\frac{(k+2)\eta_k}{(k+1)\ln^{k+1} x},$$
we obtain the others inequalities since $S\leqs0$.
\end{proof}

\begin{table}[!hp]
\caption{Values of $\theta(x)$ for $10^{6} \leq x \leq 10^{10}$}
\protect{\label{table_theta1}}
\begin{center}
\normalsize
\begin{tabular}{||c|r|r||} \hline
   x    &               $\theta(x)$               & $\psi(x)-\theta(x)$ \\
\hline
$ 1E+06 $ & $998484.175026 $ & $ 1102.422470 $\\
$ 2E+06 $ & $1998587.722137 $ & $ 1527.324070 $\\
$ 3E+06 $ & $2998107.530452 $ & $ 1892.449541 $\\
$ 4E+06 $ & $3997323.492084 $ & $ 2167.364713 $\\
$ 5E+06 $ & $4998571.086801 $ & $ 2400.053428 $\\
$ 6E+06 $ & $5996983.791998 $ & $ 2665.785692 $\\
$ 7E+06 $ & $6997751.998535 $ & $ 2823.187880 $\\
$ 8E+06 $ & $7997057.246292 $ & $ 3064.486910 $\\
$ 9E+06 $ & $8997625.570065 $ & $ 3224.678815 $\\
$ 1E+07 $ & $9995179.317856 $ & $ 3360.085490 $\\
$ 2E+07 $ & $19995840.882153 $ & $ 4759.143006 $\\
$ 3E+07 $ & $29994907.240152 $ & $ 5797.041942 $\\
$ 4E+07 $ & $39994781.014188 $ & $ 6699.200805 $\\
$ 5E+07 $ & $49993717.861720 $ & $ 7489.482783 $\\
$ 6E+07 $ & $59991136.134174 $ & $ 8172.843038 $\\
$ 7E+07 $ & $69991996.348980 $ & $ 8786.853393 $\\
$ 8E+07 $ & $79988578.197461 $ & $ 9388.261229 $\\
$ 9E+07 $ & $89985867.940581 $ & $ 9992.336337 $\\
$ 1E+08 $ & $99987730.018022 $ & $ 10512.778605 $\\
$ 2E+08 $ & $199982302.435783 $ & $ 14725.068769 $\\
$ 3E+08 $ & $299981378.219200 $ & $ 18000.443659 $\\
$ 4E+08 $ & $399982033.338736 $ & $ 20744.718991 $\\
$ 5E+08 $ & $499983789.813730 $ & $ 23200.125087 $\\
$ 6E+08 $ & $599976282.577668 $ & $ 25426.013243 $\\
$ 7E+08 $ & $699976911.639135 $ & $ 27402.910397 $\\
$ 8E+08 $ & $799969331.209833 $ & $ 29215.380561 $\\
$ 9E+08 $ & $899953849.181850 $ & $ 30963.754721 $\\
$ 1E+09 $ & $999968978.577566 $ & $ 32617.412861 $\\
$ 2E+09 $ & $1999941083.684486 $ & $ 46075.813369 $\\
$ 3E+09 $ & $2999937036.966284 $ & $ 56255.144708 $\\
$ 4E+09 $ & $3999946136.165586 $ & $ 64858.831531 $\\
$ 5E+09 $ & $4999906575.362844 $ & $ 72411.275590 $\\
$ 6E+09 $ & $5999930311.133705 $ & $ 79301.775139 $\\
$ 7E+09 $ & $6999917442.519773 $ & $ 85715.065356 $\\
$ 8E+09 $ & $7999890792.693956 $ & $ 91420.172461 $\\
$ 9E+09 $ & $8999894889.497541 $ & $ 97066.566501 $\\
$ 1E+10 $ & $9999939830.657757 $ & $ 102289.175716 $\\
\hline
\end{tabular}
\end{center}
\end{table}

\pagebreak 
\begin{table}[!hp]
\caption{Values of $\theta(x)$ for $10^{10} \leq x \leq 10^{15}$}
\protect{\label{table_theta2}}
\begin{center}
\normalsize
\begin{tabular}{||c|r|r||} \hline
   x    &      $\theta(x)$     & $\psi(x)-\theta(x)$ \\
\hline
$ 1E+10 $ & $9999939830.657757 $ & $ 102289.175716 $\\
$ 2E+10 $ & $19999821762.768212 $ & $ 144339.622582 $\\
$ 3E+10 $ & $29999772119.815419 $ & $ 176300.955450 $\\
$ 4E+10 $ & $39999808348.775748 $ & $ 203538.541084 $\\
$ 5E+10 $ & $49999728380.731899 $ & $ 227474.729168 $\\
$ 6E+10 $ & $59999772577.550769 $ & $ 249003.320704 $\\
$ 7E+10 $ & $69999769944.203933 $ & $ 268660.720820 $\\
$ 8E+10 $ & $79999718357.195652 $ & $ 287365.266118 $\\
$ 9E+10 $ & $89999644656.090911 $ & $ 304250.688854 $\\
$ 1E+11 $ & $99999737653.107445 $ & $ 320803.322857 $\\
$ 2E+11 $ & $199999695484.246439 $ & $ 453289.609568 $\\
$ 3E+11 $ & $299999423179.995211 $ & $ 554528.646163 $\\
$ 4E+11 $ & $399999101196.308601 $ & $ 640000.361434 $\\
$ 5E+11 $ & $499999105742.583455 $ & $ 715211.001138 $\\
$ 6E+11 $ & $599999250571.436655 $ & $ 783167.715577 $\\
$ 7E+11 $ & $699998999499.845475 $ & $ 845911.916175 $\\
$ 8E+11 $ & $799999133776.084743 $ & $ 904203.190001 $\\
$ 9E+11 $ & $899998818628.952024 $ & $ 958602.924046 $\\
$ 1E+12 $ & $999999030333.096225 $ & $ 1009803.669232 $\\
$ 2E+12 $ & $1999998755521.470649 $ & $ 1427105.865316 $\\
$ 3E+12 $ & $2999997819758.987859 $ & $ 1746299.820370 $\\
$ 4E+12 $ & $3999998370195.717561 $ & $ 2016279.693623 $\\
$ 5E+12 $ & $4999998073643.711478 $ & $ 2253672.042145 $\\
$ 6E+12 $ & $5999997276726.877147 $ & $ 2467566.593710 $\\
$ 7E+12 $ & $6999996936360.165729 $ & $ 2665065.541181 $\\
$ 8E+12 $ & $7999997864671.383505 $ & $ 2848858.049155 $\\
$ 9E+12 $ & $8999996425300.244577 $ & $ 3021079.319393 $\\
$ 1E+13 $ & $9999996988293.034200 $ & $ 3183704.089025 $\\
$ 2E+13 $ & $19999995126082.228688 $ & $ 4499685.436490 $\\
$ 3E+13 $ & $29999995531389.845427 $ & $ 5509328.368277 $\\
$ 4E+13 $ & $39999993533724.316829 $ & $ 6359550.652121 $\\
$ 5E+13 $ & $49999992543194.263655 $ & $ 7109130.001413 $\\
$ 6E+13 $ & $59999990297033.626198 $ & $ 7785491.725387 $\\
$ 7E+13 $ & $69999994316409.871731 $ & $ 8407960.376833 $\\
$ 8E+13 $ & $79999990160858.304239 $ & $ 8988688.375101 $\\
$ 9E+13 $ & $89999989501395.073897 $ & $ 9531798.550749 $\\
$ 1E+14 $ & $99999990573246.978538 $ & $ 10045400.569463 $\\
$ 2E+14 $ & $199999983475767.543204 $ & $ 14201359.711421 $\\
$ 3E+14 $ & $299999986702246.281944 $ & $ 17388356.540338 $\\
$ 4E+14 $ & $399999982296901.085038 $ & $ 20074942.600622 $\\
$ 5E+14 $ & $499999974019856.236519 $ & $ 22439658.012185 $\\
$ 6E+14 $ & $599999983610646.997632 $ & $ 24580138.242324 $\\
$ 7E+14 $ & $699999971887332.157455 $ & $ 26545816.027402 $\\
$ 8E+14 $ & $799999964680836.091645 $ & $ 28378339.693784 $\\
$ 9E+14 $ & $899999961386694.231242 $ & $ 30098146.961102 $\\
$ 1E+15 $ & $999999965752660.939840$ &   $31724269.567843$\\
\hline
\end{tabular}
\end{center}
\end{table}

\begin{table}[!hp]
\caption{Values of $\epsilon(x)$ for $\psi$ and $\theta$}
\label{table_eps}
\normalsize
$$\begin{array}{||l|l|l||l|l|l||}
\hline
b&\epsilon_\psi&\epsilon_\theta & b &\epsilon_\psi&\epsilon_\theta\\
\hline
20 & 6.123 E-4 & 6.606 E-4 & 100 & 2.903 E-11 & 2.903 E-11 \\
21 & 4.072 E-4 & 4.363 E-4 & 200 & 2.838 E-11 & 2.838 E-11 \\
22 & 2.706 E-4 & 2.881 E-4 & 300 & 2.772 E-11 & 2.772 E-11 \\
23 & 1.792 E-4 & 1.897 E-4 & 400 & 2.706 E-11 & 2.706 E-11 \\
24 & 1.183 E-4 & 1.247 E-4 & 500 & 2.641 E-11 & 2.641 E-11 \\
25 & 7.789 E-5 & 8.172 E-5 & 600 & 2.575 E-11 & 2.575 E-11 \\
\ln(10^{11})& 6.788 E-5 & 7.112 E-5 & 1000 & 2.315 E-11 & 2.315 E-11 \\
26 & 5.121 E-5 & 5.352 E-5 & 1250 & 2.153 E-11 & 2.153 E-11 \\
27 & 3.368 E-5 & 3.507 E-5 & 1500 & 1.991 E-11 & 1.991 E-11 \\
28 & 2.224 E-5 & 2.308 E-5 & 2000 & 1.671 E-11 & 1.671 E-11 \\
29 & 1.451 E-5 & 1.502 E-5 & 2200 & 1.544 E-11 & 1.544 E-11 \\
30 & 9.414 E-6 & 9.724 E-6 & 2500 & 1.355 E-11 & 1.355 E-11 \\
31 & 6.099 E-6 & 6.287 E-6 & 2800 & 1.169 E-11 & 1.169 E-11 \\
32 & 3.944 E-6 & 4.057 E-6 & 3000 & 1.047 E-11 & 1.047 E-11 \\
33 & 2.545 E-6 & 2.614 E-6 & 3200 & 9.267 E-12 & 9.267 E-12 \\
34 & 1.640 E-6 & 1.682 E-6 & 3300 & 8.658 E-12 & 8.658 E-12 \\
\ln(10^{15}) & 1.293 E-6 & 1.325 E-6 & 3400 & 8.083 E-12 & 8.083 E-12 \\
35 & 1.055 E-6 & 1.080 E-6 & 3455 & 7.750 E-12 & 7.750 E-12 \\
36 & 6.775 E-7 & 6.928 E-7 & 3500 & 7.488 E-12 & 7.488 E-12 \\
37 & 4.348 E-7 & 4.441 E-7 & 3600 & 6.930 E-12 & 6.930 E-12 \\
38 & 2.793 E-7 & 2.849 E-7 & 3700 & 6.351 E-12 & 6.351 E-12 \\
39 & 1.805 E-7 & 1.839 E-7 & 3750 & 6.080 E-12 & 6.080 E-12 \\
40 & 1.163 E-7 & 1.184 E-7 & 3800 & 5.821 E-12 & 5.821 E-12 \\
41 & 7.414 E-8 & 7.539 E-8 & 3850 & 5.533 E-12 & 5.533 E-12 \\
42 & 4.723 E-8 & 4.799 E-8 & 3900 & 5.259 E-12 & 5.259 E-12 \\
43 & 3.011 E-8 & 3.057 E-8 & 3950 & 4.999 E-12 & 4.999 E-12 \\
44 & 1.932 E-8 & 1.960 E-8 & 4000 & 4.751 E-12 & 4.751 E-12 \\
45 & 1.234 E-8 & 1.251 E-8 & 4050 & 4.496 E-12 & 4.496 E-12 \\
46 & 7.839 E-9 & 7.941 E-9 & 4100 & 4.231 E-12 & 4.231 E-12 \\
47 & 5.026 E-9 & 5.088 E-9 & 4150 & 3.981 E-12 & 3.981 E-12 \\
48 & 3.190 E-9 & 3.228 E-9 & 4200 & 3.746 E-12 & 3.746 E-12 \\
49 & 2.038 E-9 & 2.061 E-9 & 4300 & 3.308 E-12 & 3.308 E-12 \\
50 & 1.301 E-9 & 1.315 E-9 & 4400 & 2.844 E-12 & 2.844 E-12 \\
55 & 1.481 E-10 & 1.492 E-10 & 4500 & 2.445 E-12 & 2.445 E-12 \\
60 & 3.917 E-11 & 3.926 E-11 & 4700 & 1.774 E-12 & 1.774 E-12 \\
70 & 2.929 E-11 & 2.929 E-11 & 5000 & 9.562 E-13 & 9.562 E-13 \\
75 & 2.920 E-11 & 2.920 E-11 & 10000 & 6.341 E-18 & 6.341 E-18 \\
\hline
\end{array}$$
\end{table}

\begin{table}[!hp]
\caption{ Values of $\eta_k$ valid for $\exp(b_i)\leqs x\leqs \exp(b_{i+1})$.}
\protect{\label{table_eta1}}
\normalsize
$$\begin{array}{||r|r|r|r|r||}
\hline
b_i&\eta_1&\eta_2&\eta_3&\eta_4\\
\hline
20 & 1.388 E-2 & 2.914 E-1 & 6.118 E+0 & 1.285 E+2 \\
21 & 9.597 E-3 & 2.112 E-1 & 4.645 E+0 & 1.022 E+2 \\
22 & 6.625 E-3 & 1.524 E-1 & 3.505 E+0 & 8.061 E+1 \\
23 & 4.553 E-3 & 1.093 E-1 & 2.623 E+0 & 6.294 E+1 \\
24 & 3.116 E-3 & 7.790 E-2 & 1.948 E+0 & 4.869 E+1 \\
25 & 2.070 E-3 & 5.243 E-2 & 1.328 E+0 & 3.364 E+1 \\
\ln(10^{11}) & 1.849 E-3 & 4.808 E-2 & 1.250 E+0 & 3.250 E+1 \\
26 & 1.445 E-3 & 3.902 E-2 & 1.054 E+0 & 2.845 E+1 \\
27 & 9.820 E-4 & 2.750 E-2 & 7.699 E-1 & 2.156 E+1 \\
28 & 6.693 E-4 & 1.941 E-2 & 5.629 E-1 & 1.633 E+1 \\
29 & 4.504 E-4 & 1.352 E-2 & 4.054 E-1 & 1.216 E+1 \\
30 & 3.015 E-4 & 9.344 E-3 & 2.897 E-1 & 8.980 E+0 \\
31 & 2.012 E-4 & 6.437 E-3 & 2.060 E-1 & 6.592 E+0 \\
32 & 1.339 E-4 & 4.418 E-3 & 1.458 E-1 & 4.811 E+0 \\
33 & 8.887 E-5 & 3.022 E-3 & 1.028 E-1 & 3.493 E+0 \\
34 & 5.807 E-5 & 2.006 E-3 & 6.928 E-2 & 2.393 E+0 \\
\ln(10^{15})  & 4.637 E-5 & 1.623 E-3 & 5.680 E-2 & 1.988 E+0 \\
35 & 3.888 E-5 & 1.400 E-3 & 5.039 E-2 & 1.814 E+0 \\
36 & 2.564 E-5 & 9.484 E-4 & 3.509 E-2 & 1.299 E+0 \\
37 & 1.688 E-5 & 6.412 E-4 & 2.437 E-2 & 9.259 E-1 \\
38 & 1.112 E-5 & 4.333 E-4 & 1.690 E-2 & 6.591 E-1 \\
39 & 7.354 E-6 & 2.942 E-4 & 1.177 E-2 & 4.707 E-1 \\
40 & 4.853 E-6 & 1.990 E-4 & 8.157 E-3 & 3.345 E-1 \\
41 & 3.167 E-6 & 1.330 E-4 & 5.586 E-3 & 2.346 E-1 \\
42 & 2.064 E-6 & 8.872 E-5 & 3.815 E-3 & 1.641 E-1 \\
43 & 1.345 E-6 & 5.918 E-5 & 2.604 E-3 & 1.146 E-1 \\
44 & 8.818 E-7 & 3.968 E-5 & 1.786 E-3 & 8.036 E-2 \\
45 & 5.752 E-7 & 2.646 E-5 & 1.218 E-3 & 5.599 E-2 \\
46 & 3.733 E-7 & 1.755 E-5 & 8.245 E-4 & 3.875 E-2 \\
47 & 2.442 E-7 & 1.173 E-5 & 5.627 E-4 & 2.701 E-2 \\
48 & 1.582 E-7 & 7.749 E-6 & 3.797 E-4 & 1.861 E-2 \\
49 & 1.031 E-7 & 5.151 E-6 & 2.576 E-4 & 1.288 E-2 \\
50 & 7.229 E-8 & 3.976 E-6 & 2.187 E-4 & 1.203 E-2 \\
55 & 8.952 E-9 & 5.371 E-7 & 3.223 E-5 & 1.934 E-3 \\
60 & 2.748 E-9 & 1.924 E-7 & 1.347 E-5 & 9.425 E-4 \\
70 & 2.197 E-9 & 1.648 E-7 & 1.236 E-5 & 9.268 E-4 \\
75 & 2.920 E-9 & 2.920 E-7 & 2.920 E-5 & 2.920 E-3 \\
\hline
\end{array}$$
\end{table}
\begin{table}[!hp]
\caption{Values of $\eta_k$ (continued)}
\label{table_eta2}
\normalsize
$$\begin{array}{||r|r|r|r|r||}
\hline
b_i&\eta_1&\eta_2&\eta_3&\eta_4\\
\hline
100 & 5.805 E-9 & 1.161 E-6 & 2.322 E-4 & 4.644 E-2 \\
200 & 8.512 E-9 & 2.554 E-6 & 7.661 E-4 & 2.299 E-1 \\
300 & 1.109 E-8 & 4.434 E-6 & 1.774 E-3 & 7.094 E-1 \\
400 & 1.353 E-8 & 6.765 E-6 & 3.383 E-3 & 1.692 E+0 \\
500 & 1.585 E-8 & 9.505 E-6 & 5.703 E-3 & 3.422 E+0 \\
600 & 2.575 E-8 & 2.575 E-5 & 2.575 E-2 & 2.575 E+1 \\
1000 & 2.893 E-8 & 3.616 E-5 & 4.520 E-2 & 5.650 E+1 \\
1250 & 3.229 E-8 & 4.843 E-5 & 7.265 E-2 & 1.090 E+2 \\
1500 & 3.982 E-8 & 7.963 E-5 & 1.593 E-1 & 3.185 E+2 \\
2000 & 3.675 E-8 & 8.084 E-5 & 1.779 E-1 & 3.913 E+2 \\
2200 & 3.859 E-8 & 9.646 E-5 & 2.412 E-1 & 6.029 E+2 \\
2500 & 3.794 E-8 & 1.063 E-4 & 2.975 E-1 & 8.328 E+2 \\
2800 & 3.507 E-8 & 1.053 E-4 & 3.157 E-1 & 9.469 E+2 \\
3000 & 3.351 E-8 & 1.073 E-4 & 3.431 E-1 & 1.098 E+3 \\
3200 & 3.058 E-8 & 1.010 E-4 & 3.331 E-1 & 1.099 E+3 \\
3300 & 2.944 E-8 & 1.001 E-4 & 3.403 E-1 & 1.157 E+3 \\
3400 & 2.793 E-8 & 9.648 E-5 & 3.334 E-1 & 1.152 E+3 \\
3455 & 2.713 E-8 & 9.494 E-5 & 3.323 E-1 & 1.163 E+3 \\
3500 & 2.696 E-8 & 9.704 E-5 & 3.494 E-1 & 1.258 E+3 \\
3600 & 2.565 E-8 & 9.488 E-5 & 3.511 E-1 & 1.299 E+3 \\
3700 & 2.382 E-8 & 8.931 E-5 & 3.350 E-1 & 1.256 E+3 \\
3750 & 2.311 E-8 & 8.780 E-5 & 3.337 E-1 & 1.268 E+3 \\
3800 & 2.241 E-8 & 8.628 E-5 & 3.322 E-1 & 1.279 E+3 \\
3850 & 2.158 E-8 & 8.416 E-5 & 3.282 E-1 & 1.280 E+3 \\
3900 & 2.078 E-8 & 8.205 E-5 & 3.241 E-1 & 1.281 E+3 \\
3950 & 2.000 E-8 & 7.997 E-5 & 3.199 E-1 & 1.280 E+3 \\
4000 & 1.924 E-8 & 7.793 E-5 & 3.156 E-1 & 1.279 E+3 \\
4050 & 1.844 E-8 & 7.557 E-5 & 3.099 E-1 & 1.271 E+3 \\
4100 & 1.756 E-8 & 7.286 E-5 & 3.024 E-1 & 1.255 E+3 \\
4150 & 1.672 E-8 & 7.022 E-5 & 2.950 E-1 & 1.239 E+3 \\
4200 & 1.611 E-8 & 6.927 E-5 & 2.979 E-1 & 1.281 E+3 \\
4300 & 1.456 E-8 & 6.404 E-5 & 2.818 E-1 & 1.240 E+3 \\
4400 & 1.280 E-8 & 5.759 E-5 & 2.592 E-1 & 1.167 E+3 \\
4500 & 1.150 E-8 & 5.401 E-5 & 2.539 E-1 & 1.194 E+3 \\
4700 & 8.868 E-9 & 4.434 E-5 & 2.217 E-1 & 1.109 E+3 \\
\hline
\end{array}$$
\end{table}
\begin{table}[!hp]
\label{table_012}
\caption{Values for $\theta(x)$}
\normalsize
$$
\begin{array}{||l||l|l||l|l||l|l||}
\hline
      & a_0     &  b_0    & a_1     &  b_1      & a_2     &  b_2     \\
\hline
1E+08 & 0.99985 & 0.99998 & 0.00275 & -0.00044 & 0.05062 & -0.00851 \\
2E+08 & 0.99989 & 0.99997 & 0.00201 & -0.00065 & 0.03847 & -0.01275 \\
3E+08 & 0.99991 & 0.99998 & 0.00165 & -0.00057 & 0.03256 & -0.01131 \\
4E+08 & 0.99993 & 0.99998 & 0.00124 & -0.00049 & 0.02447 & -0.00988 \\
5E+08 & 0.99992 & 0.99998 & 0.00152 & -0.00052 & 0.03039 & -0.01061 \\
6E+08 & 0.99994 & 0.99999 & 0.00103 & -0.00038 & 0.02095 & -0.00785 \\
7E+08 & 0.99993 & 0.99998 & 0.00126 & -0.00051 & 0.02577 & -0.01054 \\
8E+08 & 0.99994 & 0.99998 & 0.00110 & -0.00044 & 0.02268 & -0.00916 \\
9E+08 & 0.99994 & 0.99998 & 0.00117 & -0.00050 & 0.02398 & -0.01050 \\
1E+09 & 0.99995 & 0.99999 & 0.00096 & -0.00021 & 0.02009 & -0.00455 \\
2E+09 & 0.99996 & 1.00000 & 0.00067 & -0.00018 & 0.01427 & -0.00401 \\
3E+09 & 0.99997 & 1.00000 & 0.00062 & -0.00015 & 0.01340 & -0.00333 \\
4E+09 & 0.99997 & 1.00000 & 0.00050 & -0.00017 & 0.01087 & -0.00390 \\
5E+09 & 0.99997 & 1.00000 & 0.00046 & -0.00018 & 0.01026 & -0.00410 \\
6E+09 & 0.99998 & 1.00000 & 0.00040 & -0.00013 & 0.00900 & -0.00313 \\
7E+09 & 0.99998 & 1.00000 & 0.00045 & -0.00018 & 0.01011 & -0.00415 \\
8E+09 & 0.99998 & 1.00000 & 0.00034 & -0.00016 & 0.00774 & -0.00367 \\
9E+09 & 0.99998 & 1.00000 & 0.00037 & -0.00010 & 0.00840 & -0.00249 \\
1E+10 & 0.99998 & 1.00000 & 0.00038 & -0.00008 & 0.00876 & -0.00203 \\
2E+10 & 0.99998 & 1.00000 & 0.00025 & -0.00006 & 0.00584 & -0.00152 \\
3E+10 & 0.99999 & 1.00000 & 0.00020 & -0.00005 & 0.00473 & -0.00122 \\
4E+10 & 0.99999 & 1.00000 & 0.00018 & -0.00007 & 0.00431 & -0.00183 \\
5E+10 & 0.99999 & 1.00000 & 0.00019 & -0.00004 & 0.00458 & -0.00119 \\
6E+10 & 0.99999 & 1.00000 & 0.00015 & -0.00006 & 0.00356 & -0.00167 \\
7E+10 & 0.99999 & 1.00000 & 0.00014 & -0.00004 & 0.00338 & -0.00117 \\
8E+10 & 0.99999 & 1.00000 & 0.00011 & -0.00006 & 0.00276 & -0.00164 \\
9E+10 & 0.99999 & 1.00000 & 0.00011 & -0.00004 & 0.00262 & -0.00114 \\
1E+11 & 0.99999 & 1.00000 & 0.00014 & -0.00002 & 0.00341 & -0.00058 \\
2E+11 & 0.99999 & 1.00000 & 0.00008 & -0.00002 & 0.00206 & -0.00057 \\
3E+11 & 0.99999 & 1.00000 & 0.00007 & -0.00001 & 0.00170 & -0.00031 \\
4E+11 & 0.99999 & 1.00000 & 0.00008 & -0.00002 & 0.00188 & -0.00061 \\
5E+11 & 0.99999 & 1.00000 & 0.00006 & -0.00001 & 0.00138 & -0.00038 \\
6E+11 & 0.99999 & 1.00000 & 0.00005 & -0.00002 & 0.00131 & -0.00070 \\
7E+11 & 0.99999	& 1.00000	& 0.00006	& -0.00001 & 0.00144 & -0.00039 \\
\hline
\end{array}
$$
where the constants satisfy for  $n\cdot10^k\leqs x\leqs (n+1)\cdot10^k$
$$
\begin{array}{rcl}
 a_0 x&\leqs \theta(x)\leqs& b_0 x\\
x-a_1 \frac{x}{\ln x}&\leqs\theta(x)\leqs& x+b_1 \frac{x}{\ln x}\\
x-a_2\frac{x}{\ln^2 x}&\leqs\theta(x) \leqs& x+b_2\frac{x}{\ln^2 x}
\end{array}
$$
up to $8\cdot 10^{11}$.
\end{table}
\begin{table}[!hp]
\label{table_834}
\caption{Values for $p_k$ and $\theta(p_k)$}
\normalsize
$$
\begin{array}{||l||l|l||l|l||l|l||}
\hline
      & a_8     &  b_8    & a_3     &  b_3      & a_4     &  b_4     \\
\hline
1E+08 & 2.07947 & 2.07516 & 0.95665 & 0.95433 & 2.07207 & 2.03783 \\
2E+08 & 2.07517 & 2.07280 & 0.95517 & 0.95405 & 2.06330 & 2.04341 \\
3E+08 & 2.07281 & 2.07122 & 0.95493 & 0.95379 & 2.06236 & 2.04535 \\
4E+08 & 2.07123 & 2.07005 & 0.95459 & 0.95403 & 2.06210 & 2.05123 \\
5E+08 & 2.07006 & 2.06909 & 0.95448 & 0.95357 & 2.06053 & 2.04534 \\
6E+08 & 2.06910 & 2.06833 & 0.95448 & 0.95374 & 2.06271 & 2.05150 \\
7E+08 & 2.06834 & 2.06767 & 0.95421 & 0.95342 & 2.05976 & 2.04701 \\
8E+08 & 2.06768 & 2.06710 & 0.95411 & 0.95342 & 2.05999 & 2.04871 \\
9E+08 & 2.06711 & 2.06660 & 0.95395 & 0.95336 & 2.05808 & 2.04762 \\
1E+09 & 2.06661 & 2.06350 & 0.95409 & 0.95319 & 2.06243 & 2.04925 \\
2E+09 & 2.06351 & 2.06183 & 0.95355 & 0.95311 & 2.05913 & 2.05126 \\
3E+09 & 2.06184 & 2.06070 & 0.95336 & 0.95297 & 2.05821 & 2.05036 \\
4E+09 & 2.06071 & 2.05985 & 0.95322 & 0.95295 & 2.05684 & 2.05159 \\
5E+09 & 2.05986 & 2.05917 & 0.95314 & 0.95288 & 2.05600 & 2.05103 \\
6E+09 & 2.05918 & 2.05862 & 0.95313 & 0.95287 & 2.05643 & 2.05149 \\
7E+09 & 2.05863 & 2.05815 & 0.95305 & 0.95276 & 2.05517 & 2.04994 \\
8E+09 & 2.05816 & 2.05774 & 0.95300 & 0.95283 & 2.05489 & 2.05168 \\
9E+09 & 2.05775 & 2.05739 & 0.95300 & 0.95277 & 2.05540 & 2.05073 \\
1E+10 & 2.05740 & 2.05515 & 0.95301 & 0.95264 & 2.05571 & 2.04968 \\
2E+10 & 2.05516 & 2.05395 & 0.95283 & 0.95263 & 2.05364 & 2.04983 \\
3E+10 & 2.05396 & 2.05313 & 0.95275 & 0.95262 & 2.05225 & 2.04964 \\
4E+10 & 2.05314 & 2.05252 & 0.95272 & 0.95260 & 2.05143 & 2.04899 \\
5E+10 & 2.05253 & 2.05203 & 0.95272 & 0.95258 & 2.05127 & 2.04869 \\
6E+10 & 2.05204 & 2.05163 & 0.95269 & 0.95260 & 2.05060 & 2.04875 \\
7E+10 & 2.05164 & 2.05129 & 0.95270 & 0.95260 & 2.05060 & 2.04865 \\
8E+10 & 2.05130 & 2.05099 & 0.95267 & 0.95262 & 2.04978 & 2.04877 \\
9E+10 & 2.05100 & 2.05073 & 0.95269 & 0.95262 & 2.04982 & 2.04879 \\
1E+11	& 2.05074	& 2.04910	& 0.95271	& 0.95259	& 2.05014	& 2.04734 \\
2E+11	& 2.04911	& 2.04821	& 0.95273	& 0.95266	& 2.04851	& 2.04668 \\
3E+11	& 2.04822	& 2.04761	& 0.95276	& 0.95270	& 2.04775	& 2.04621 \\
4E+11	& 2.04762	& 2.04716	& 0.95278	& 0.95271	& 2.04692	& 2.04601 \\
5E+11	& 2.04717	& 2.04680	& 0.95282	& 0.95276	& 2.04670	& 2.04579 \\
6E+11	& 2.04681	& 2.04651	& 0.95283	& 0.95279	& 2.04617	& 2.04543 \\
7E+11 & 2.04652	& 2.04625	& 0.95286	& 0.95280	& 2.04597	& 2.04524 \\
\hline
\end{array}$$

where the constants satisfy for  $n\cdot10^m\leqs p_k\leqs (n+1)\cdot10^m$

$$\begin{array}{rcl}
k\left(\ln k+\ln_2 k-1+\frac{\ln_2 k-a_8}{\ln k}\right)&\leqs \theta(p_k)
\leqs & k\left(\ln k+\ln_2 k-1+\frac{\ln_2 k-b_8}{\ln k}\right)\\
k\left(\ln k+\ln_2 k-a_3\right)&\leqs p_k\leqs& k\left(\ln k+\ln_2 k-b_3\right)\\
k\left(\ln k+\ln_2 k-1+\frac{\ln_2 k-a_4}{\ln k}\right)&\leqs p_k\leqs& k\left(\ln k+\ln_2 k-1+\frac{\ln_2 k-b_4}{\ln k}\right)\\
\end{array}$$
up to $8\cdot 10^{11}$.
\end{table}
\begin{table}[!hp]
\label{table_567}
\caption{Values for $\pi(x)$}
\normalsize
$$
\begin{array}{||l||l|l||l|l||l|l||}
\hline
      & a_5     &  b_5    & a_6     &  b_6      & a_7     &  b_7     \\
\hline
1E+08 & 1.12379 & 1.13015 & 2.36474 & 2.40986 & 1.06139 & 1.06514\\
2E+08 & 1.12113 & 1.12429 & 2.35944 & 2.38213 & 1.06022 & 1.06184\\
3E+08 & 1.11922 & 1.12158 & 2.35351 & 2.37715 & 1.05917 & 1.06074\\
4E+08 & 1.11802 & 1.11954 & 2.35815 & 2.36977 & 1.05878 & 1.05964\\
5E+08 & 1.11642 & 1.11818 & 2.34796 & 2.36727 & 1.05793 & 1.05906\\
6E+08 & 1.11556 & 1.11706 & 2.35087 & 2.36678 & 1.05756 & 1.05858\\
7E+08 & 1.11455 & 1.11587 & 2.34345 & 2.36058 & 1.05696 & 1.05793\\
8E+08 & 1.11363 & 1.11492 & 2.34295 & 2.35791 & 1.05656 & 1.05745\\
9E+08 & 1.11327 & 1.11405 & 2.34153 & 2.35399 & 1.05647 & 1.05700\\
1E+09 & 1.10903 & 1.11346 & 2.33374 & 2.35650 & 1.05441 & 1.05673\\
2E+09 & 1.10679 & 1.10928 & 2.33043 & 2.34118 & 1.05336 & 1.05467\\
3E+09 & 1.10527 & 1.10683 & 2.32501 & 2.33314 & 1.05265 & 1.05342\\
4E+09 & 1.10395 & 1.10535 & 2.32169 & 2.32929 & 1.05195 & 1.05272\\
5E+09 & 1.10306 & 1.10408 & 2.31900 & 2.32487 & 1.05152 & 1.05208\\
6E+09 & 1.10226 & 1.10314 & 2.31746 & 2.32381 & 1.05114 & 1.05162\\
7E+09 & 1.10150 & 1.10234 & 2.31325 & 2.32076 & 1.05074 & 1.05124\\
8E+09 & 1.10096 & 1.10161 & 2.31378 & 2.31816 & 1.05049 & 1.05084\\
9E+09 & 1.10054 & 1.10103 & 2.31191 & 2.31673 & 1.05034 & 1.05057\\
1E+10 & 1.09706 & 1.10062 & 2.30164 & 2.31683 & 1.04856 & 1.05041\\
2E+10 & 1.09520 & 1.09708 & 2.29665 & 2.30469 & 1.04764 & 1.04858\\
3E+10 & 1.09396 & 1.09524 & 2.29308 & 2.29816 & 1.04703 & 1.04767\\
4E+10 & 1.09295 & 1.09398 & 2.28955 & 2.29441 & 1.04651 & 1.04706\\
5E+10 & 1.09220 & 1.09298 & 2.28817 & 2.29108 & 1.04616 & 1.04655\\
6E+10 & 1.09157 & 1.09223 & 2.28576 & 2.28917 & 1.04585 & 1.04619\\
7E+10 & 1.09100 & 1.09158 & 2.28458 & 2.28741 & 1.04556 & 1.04587\\
8E+10 & 1.09050 & 1.09102 & 2.28282 & 2.28500 & 1.04531 & 1.04558\\
9E+10 & 1.09010 & 1.09052 & 2.28203 & 2.28347 & 1.04511 & 1.04532\\
1E+11	& 1.08738	& 1.09012	& 2.27312	& 2.28311	& 1.04375	& 1.04514\\
2E+11	& 1.08583	& 1.08739	& 2.26810	& 2.27413	& 1.04297	& 1.04377\\
3E+11	& 1.08477	& 1.08585	& 2.26471	& 2.26852	& 1.04244	& 1.04299\\
4E+11	& 1.08398	& 1.08478	& 2.26238	& 2.26507	& 1.04205	& 1.04245\\
5E+11	& 1.08335	& 1.08399	& 2.26052	& 2.26261	& 1.04174	& 1.04206\\
6E+11	& 1.08281	& 1.08337	& 2.25875	& 2.26085	& 1.04147	& 1.04175\\
7E+11 & 1.08236	& 1.08282	& 2.25747	& 2.25901	& 1.04124	& 1.04148\\
\hline
\end{array}$$

where the constants satisfy for  $n\cdot10^k\leqs x\leqs (n+1)\cdot10^k$

$$\begin{array}{rcl}
\frac{x}{\ln x}\left(1+\frac{a_5}{\ln x}\right)&<
\pi(x)\leqs& \frac{x}{\ln x}\left(1+\frac{b_5}{\ln x}\right)\\
\frac{x}{\ln x}\left(1+\frac{1}{\ln x}+\frac{a_6}{\ln^2 x}\right)&\leqs
\pi(x)\leqs&\frac{x}{\ln x}\left(1+\frac{1}{\ln x}+\frac{b_6}{\ln^2 x}\right)\\
\frac{x}{\ln x-a_7}&\leqs
\pi(x)\leqs&\frac{x}{\ln x-b_7}
\end{array}$$
up to $8\cdot 10^{11}$.
\end{table}


\begin{thebibliography}{99}

\bibitem{Brent:MathOfComp:Twin}\textsc{R. P. Brent},
{``Irregularities in the Distribution of Primes and Twin Primes''},
{\it Math. Of Computation},  Vol. {\bf 29}, Number 129 (January 1975) pp. 43-56.

\bibitem{Cesaro}{\sc E. Cesaro},
{``Sur une formule empirique de M. Pervouchine''},
{\it Comptes rendus hebdo. des s\'eances de l'acad\'emie des sciences},
  Vol. {\bf CXIX}, (1895) pp. 848-849.

\bibitem{Cipolla:determinazione}\textsc{M. Cipolla},
{``La determinazione assintotica dell $n^{imo}$ numero primo''},
{\it Matematiche Napoli},  Vol. {\bf 3}, (1902) pp. 132-166.

\bibitem{deleglise:pi}\textsc{M. Del\'eglise} \& \textsc{J. Rivat}
`` Computing $\pi(x)$: The Meissel, Lehmer, Lagarias, Miller, Odlyzko method''
{\it Math. Of Computation},  Vol. {\bf 65}, Number 213 (January 1996), pp. 235-245.

\bibitem{deleglise:psi}\textsc{M. Del\'eglise} \& \textsc{J. Rivat}
`` Computing $\psi(x)$'',
{\it Math. Of Computation},  Vol. {\bf 67}, Number 224 (October 1998), pp. 1691-1696.

\bibitem{dusart:pk}\textsc{P. Dusart}, `` The $k^{th}$ prime is greater than $k(\ln k+\ln\ln k-1)$ for $k\geq 2$", {\it Math. Of Computation},  Vol. {\bf 68}, Number 225 (January 1999), pp. 411-415.

\bibitem{dusart:eps}\textsc{P. Dusart}, ``Estimates of $\psi,\,\theta$ for large values of $x$ without R.H.", submitted.

\bibitem{Ellison:nbpr}\textsc{W. J. Ellison},
{``Les nombres premiers''},
{\it Publications de l'institut de Math\'ematique de l'universit\'e de Nancago, IX}, Hermann (1975).

\bibitem{Gourdon} \textsc{X. Gourdon}, 
{``The $10^{13}$ first zeros of the Riemann Zeta function, and zeros computation at very large height"}, 
2004, 
\texttt{http://numbers.computation.free.fr/Constants/ Miscellaneous/zetazeros1e13-1e24.pdf}


\bibitem{Kadiri}\textsc{H. Kadiri},
{``Une r\'egion explicite sans z\'ero pour la fonction Zeta de Riemann''}, (2004),
{http://arxiv.org/abs/math.NT/0401238}

\bibitem{Lune86:ZerosZeta}\textsc{J. van de Lune}, \textsc{H. J. J. te Riele} \& \textsc{D.T. Winter},
{``On the Zeros of the Riemann Zeta Function in the Critical Strip.IV''},
{\it Math. Of Computation},  Vol. {\bf 46}, Number 174 (April 1986) pp. 667-681.

\bibitem{Massias:NP:effectiv} \textsc{Jean-Pierre Massias} \& \textsc{Guy Robin}, 
	{``Bornes effectives pour certaines fonctions concernant les 
	nombres premiers''},
	{\it Journal Th. Nombres de Bordeaux}, Vol. {\bf 8} (1996) pp. 213-238.

\bibitem{Pereira:MathOfComp:Chebyshev}\textsc{N. Costa Pereira},
{``Estimates for the Chebyshev Function $\psi(x)-\theta(x)$''},
{\it Math. Of Computation},  Vol. {\bf 44}, Number 169 (January 1985) pp. 211-221.

\bibitem{Ramare:JNT:short} \textsc{O. Ramar\'e} \& \textsc{Y. Saouter},
{``Short effective intervals containing primes''}, 
 {\it Journal of Number Theory}, vol. {\bf 98} (2003) pp. 10-33. 

\bibitem{Riesel}\textsc{Hans Riesel},
{``Prime Numbers and Computer Methods for Factori\-za\-tion''},
Birk\-h\"auser (1985)

\bibitem{Robin:Acta:estim} \textsc{Guy Robin},
{``Estimation de la fonction de Tchebycheff $\theta$ sur le 
$k^{i\grave{e}me}$ nombre premier 
et grandes valeurs de la fonctions $\omega(n)$, nombre de diviseurs premiers de $n$''}, 
 {\it Acta Arithmetica}, vol. {\bf 42}, num\'ero 4 (1983) pp. 367-389. 

\bibitem{Robin:MathOfComp:perm} {\sc Guy Robin},
{``Permanence de relations de r\'ecurrence dans 
certains d\'eveloppements asymptotiques''},
 {\it Publications de l'institut math\'ematique}, tome 43 (57),
 (1988) pp. 17-25.

\bibitem{Rosser39} \textsc{Barkley Rosser},
{``The $n$-th prime is greater than $n \log n$''}, 
{\it PLMS} {\bf 45} (1939) pp. 21-44.

\bibitem{Rosser41} \textsc{Barkley Rosser},
{``Explicit bounds for some functions of prime numbers''}, 
{\it Amer. J. Math.} {\bf 63} (1941) pp. 211-232.

\bibitem{Rosser&Schoenfeld:Ill:Approx} \textsc{J. Barkley Rosser} \& \textsc{L. Schoenfeld},
{``Approximates Formulas for Some Functions of Prime Numbers''}, 
{\it Illinois Journal Math.} {\bf 6} (1962) pp. 64-94.

\bibitem{Rosser&Schoenfeld:MathOfComp:Sharper} \textsc{J. Barkley Rosser} \& \textsc{L. Schoenfeld}, 
{``A Sharper Bounds for the Chebyshev Functions $\theta(x)$ and $\psi(x)$''}, 
{\it Math. Of Computation},  Vol. {\bf 29}, Number 129 (January 1975) pp. 243-269.

\bibitem{Salvy:??:Fast} \textsc{Bruno Salvy}, 
{``Fast computation of some asymptotic functional inverses''}, 
{\it Preliminary Version},  (July 1992).

\bibitem{Schoenfeld:MathOfComp:Sharper} \textsc{Lowell Schoenfeld}, 
{``Sharper Bounds for the Chebyshev Functions $\theta(x)$ and $\psi(x)$.II''}, 
{\it Math. Of Computation},  Vol. {\bf 30}, Number 134 (April 1976) pp. 337-360.

\bibitem{zetagrid}
\textsc{Wedeniwski}, {http://www.zetagrid.net/zeta/rh.html}
\end{thebibliography}
\end{document}